\tikzstyle{NE-lines}=[pattern=north east lines, pattern color=black!45]
\newcommand{\pattern}[4]{
  \raisebox{0.6ex}{
  \begin{tikzpicture}[scale=0.3, baseline=(current bounding box.center), #1]
    \foreach \x/\y in {#4}
      \fill[NE-lines] (\x,\y) rectangle +(1,1);
    \draw (0.01,0.01) grid (#2+0.99,#2+0.99);
    \foreach \x/\y in {#3}
      \filldraw (\x,\y) circle (5pt);
  \end{tikzpicture}}
}
\newcommand{\hpii}[2]{
  \pattern{}{2}{1/#1,2/#2}
  {    0/1,
   1/0,1/1,1/2,
       2/1}
}
\newcommand{\hpiv}[4]{
  \pattern{}{4}{1/#1,2/#2,3/#3,4/#4}
  {    0/1,0/2,0/3,
   1/0,1/1,1/2,1/3,1/4,
   2/0,2/1,2/2,2/3,2/4,
   3/0,3/1,3/2,3/3,3/4,
       4/1,4/2,4/3}
}
\newcommand{\eps}{\epsilon}
\newcommand{\NN}{\mathbb{N}}
\newcommand{\ZZ}{\mathbb{Z}}
\newcommand{\QQ}{\mathbb{Q}}
\newcommand{\etal}{et~al.}
\newcommand{\Sym}{\mathcal{S}}
\newcommand{\Mesh}{\mathcal{M}}
\newcommand{\Pow}{\mathcal{P}}
\newcommand{\id}{\mathrm{id}}
\newcommand{\rid}{\overline{\id}}
\newcommand{\from}{\leftarrow}
\newcommand{\tostar}{\stackrel{*}{\to}}
\newcommand{\toplus}{\stackrel{+}{\to}}
\newcommand{\fromstar}{\stackrel{*}{\from}}
\newcommand{\nf}[1]{{#1\!\downarrow}}
\newcommand{\floor}[1]{\lfloor#1\rfloor}
\newcommand{\bracket}[1]{[\mskip1mu #1\mskip1mu]}
\DeclareMathOperator{\st}{\mathrm{st}}
\def\stackrel#1#2{\mathrel{\mathop{#2}\limits^{#1}}}
\newcommand{\fsum}[1]{\sum_{m\geq 0}m!\left(#1\right)^{m}}
\newcommand{\C}{\mathcal{C}}
\newcommand{\uu}{\boldsymbol{u}}
\newcommand{\one}{\boldsymbol{1}}
\DeclareMathOperator{\olap}{\mathcal{O}}
\DeclareMathOperator{\dom}{dom}
\newtheorem{theorem}{Theorem}[section]
\newtheorem{lemma}[theorem]{Lemma}
\newtheorem{corollary}[theorem]{Corollary}
\newtheorem*{openproblem*}{Open Problem}
\newtheorem{conjecture}[theorem]{Conjecture}
\theoremstyle{definition}
\newtheorem*{remark*}{Remark}
\newtheorem{example}[theorem]{Example}
\newtheorem*{example*}{Example}
\title{From Hertzsprung's problem to\\ pattern-rewriting systems}
\author{Anders Claesson}
\date{7 April 2021}
\begin{document}
\maketitle

\begin{abstract}
  Drawing on a problem posed by Hertzsprung in 1887, we say that a given
  permutation $\pi\in\Sym_n$ contains the Hertzsprung pattern
  $\sigma\in\Sym_k$ if there is factor $\pi(d+1)\pi(d+2)\cdots\pi(d+k)$
  of $\pi$ such that $\pi(d+1)-\sigma(1) =\cdots =
  \pi(d+k)-\sigma(k)$. Using a combination of the Goulden-Jackson
  cluster method and the transfer-matrix method we determine the joint
  distribution of occurrences of any set of (incomparable) Hertzsprung
  patterns, thus substantially generalizing earlier results by
  Jackson~\etal\ on the distribution of ascending and descending runs in
  permutations.  We apply our results to the problem of counting
  permutations up to pattern-replacement equivalences, and using
  pattern-rewriting systems---a new formalism similar to the much
  studied string-rewriting systems---we solve a couple of open problems
  raised by Linton~\etal\ in 2012.\medskip

  \noindent \textit{Keywords:} Hertzsprung's problem, cluster method,
  pattern, permutation, rewriting system
\end{abstract}
\thispagestyle{empty}

\section{Introduction}

Severin Carl Ludvig Hertzsprung (1839--1893) was a Danish senior civil
servant with a graduate degree in astronomy from the University of
Copenhagen. In 1887 a letter authored by him titled ``En
kombinationsopgave'' appeared in Tidsskrift for
mathematik~\cite{Hertzsprung1887}. The first paragraph reads
\begin{quote}
  {\it At bestemme Antallet af Maader, hvorpaa Tallene
  $1$, $2$, $3$, $4$,\dots, $n$ kunne opstilles i Række saaledes, at
  Differensen mellem to ved Siden af hinanden staaende Tal overalt skal
  være numerisk forskjellig fra 1.}
\end{quote}
Or, translated, the problem is to determine the number of ways in which
the numbers $1$, $2$, $3$, $4$, \dots, $n$ can be arranged such that the
difference between two adjacent numbers is different from~1. For $n=4$
there are only two such arrangements, namely $2413$ and $3142$. An
alternative interpretation is that we are asked for the number of ways
$n$ kings can be placed on an $n$ by $n$ chessboard, one on each row and
column, so that no two attack each other. Indeed,
Kaplansky~\cite{Kaplansky1944}---who rediscovered Hertzsprung's problem
in 1944---called it the $n$-kings problem.

In Hertzsprung's solution---which in his own words is only slightly
elegant---the problem is first generalized by defining $u_{n,k}$ as the
number of arrangements such that the numbers $1$, $2$, \dots, $k$ satisfy
the requirement that two numbers whose difference is $1$ cannot be
placed next to each other. Hertzsprung derives a recurrence relation for
the numbers $u_{n,k}$ to which he guesses a solution and subsequently
verifies that it indeed satisfies the recurrence. The solution to the
original problem is then given by
\[
  u_{n,n} \,=\, n! + \sum_{k=1}^n (-1)^k \sum_{i=1}^k
  \binom{k-1}{i-1}\binom{n-k}{i} 2^i (n-k)!
\]
The fist few numbers of this sequence are $1$, $1$, $0$, $0$, $2$,
$14$, $90$, $646$, $5242$, $47622$, which is A002464 in the
OEIS~\cite{OEIS}. A more elegant, but less explicit, solution (see e.g.\
p.\ 737 of Flajolet and Sedgewick~\cite{Flajolet2009}) is obtained using
generating functions: $u_{n,n}$ is the coefficient of $x^n$ in
the expansion of
\begin{equation}\label{Hertzsprung-gf}
  \fsum{\frac{x-x^2}{1+x}}.
\end{equation}

A reader familiar with mesh patterns~\cite{BrCl2011} may have noticed
that Hertzsprung's problem is to count permutations in $\Sym_n$ that
avoid the two patterns
\[
  \hpii{1}{2}\quad\text{and}\quad\hpii{2}{1}
\]
We will use the following terminology.  Let $\pi$ be a word (e.g.\ a
permutation written in one line notation).  A word $\beta$ is said to be
a \emph{factor} of $\pi$ if there are words $\alpha$ and $\gamma$ such
that $\pi=\alpha\beta\gamma$. If $\alpha$ is empty we also say that
$\beta$ is a \emph{prefix} of $\pi$. If, in addition, $\gamma$ is
nonempty then $\beta$ is said to be a \emph{proper prefix} of $\pi$.
Similarly, if $\gamma$ is empty we say that $\beta$ is a \emph{suffix}
of $\pi$, and that $\beta$ is a \emph{proper suffix} if, in addition,
$\alpha$ is nonempty.

Let $\tau\in \Sym_k$ and $\pi\in\Sym_n$. We call $\tau$ a
\emph{Hertzsprung factor} of $\pi$ if there is an integer $c$ and a
factor $\beta=b_1b_2\dots b_k$ of $\pi$ such that $\tau(i)-b_i=c$ for
each $i\in [k]$. In this context we also say that $\beta$ is an
\emph{occurrence} of $\tau$ and sometimes we write $\beta\simeq\tau$.
As an example, $213$ is a Hertzsprung factor of $\pi=1546372$; indeed,
$546\simeq 213$ is an occurrence of $213$ in $\pi$. We similarly define
\emph{Hertzsprung prefix} and \emph{Hertzsprung suffix}. In terms of
mesh patterns, $\tau$ is a Hertzsprung factor of $\pi$ if $\pi$ contains
the pattern $(\tau,H_k)$, where $H_k$ is the mesh
$\{0,1,\dots,k\}^2\setminus\{0,k\}^2$. For instance, $2341$ is a
Hertzsprung factor of $\pi$ if and only if $\pi$ contains the mesh
pattern
\[
  (2341, H_4) = \hpiv{2}{3}{4}{1}
\]
Mesh patterns of the form $(\tau,H_k)$ will be referred to as
\emph{Hertzsprung patterns}. Since all mesh patterns in this paper will
be of this form we will, by slight abuse of notation, identity the
pattern $(\tau,H_k)$ with $\tau$. While the Hertzsprung terminology is
original with this paper, there are a number of results on these
patterns under different names in the literature. Monotone Hertzsprung
patterns have the longest history and are usually called (increasing or
decreasing) runs in the
literature~\cite{Abramson1967,Jackson1977,Jackson1978,Jackson1976,Riordan1945}.
Myers~\cite{Myers2002} seem to have been the first to study non-monotone Hertzsprung
patterns. She calls them rigid patterns.  Bóna~\cite{Bona2008} talks of
permutations very tightly avoiding or containing a pattern. Let us now
detail some of these results.

The identity permutation (as well as the corresponding Hertzsprung
pattern) will be denoted by $\id_k=12\dots k$. Its reverse will be
denoted $\rid_k=k\dots 21$. Jackson and Read~\cite{Jackson1978} showed
that the generating functions for respectively $\id_k$-avoiding and
$\{\id_k,\rid_k\}$-avoiding permutations are
\[
  \fsum{\frac{x-x^k}{1-x^k}}\quad\text{and}\quad
  \fsum{\frac{x-2x^k+x^{k+1}}{1-x^k}}.
\]
The special case $k=2$ in the latter formula is the generating
function~\eqref{Hertzsprung-gf}.

Some Hertzsprung patterns $\tau\in\Sym_k$ can overlap with themselves in
the sense that there is a permutation $\sigma$ of length less than $2k$
such that $\tau$ is both a proper Hertzsprung prefix and a proper
Hertzsprung suffix of $\sigma$. Myers~\cite{Myers2002} calls such patterns
extendible, we will call them \emph{self-overlapping}. The monotone patterns,
$\id_k$ and $\rid_k$, are both self-overlapping. The pattern $2143$ is also
self-overlapping, while $132$ is not. Myers derives a formula for
counting the number of permutations of $[n]$ with a prescribed number
$m$ of occurrences of any single non-self-overlapping pattern
$\tau\in\Sym_k$:
\begin{equation}\label{Myers-formula}
  \sum_i(-1)^{m-i}\binom{i}{m}\binom{n-(k-1)i\mskip1mu}{i}\bigl(n-(k-1)i\bigr)!
\end{equation}
Let $\Sym_n(\tau)$ denote the set of permutations in $\Sym_n$ that avoid
the Hertzsprung pattern $\tau$. Bóna~\cite{Bona2008} showed that
$|\Sym_n(\tau)|\leq |\Sym_n(\id_k)|$ for each non-self-overlapping pattern
$\tau$. He also explains why most patterns are non-self-overlapping.  Both
Myers and Bóna ask if it is possible to determine $|\Sym_n(\tau)|$ for
any self-overlapping pattern $\tau$ other than $\id_k$ and $\rid_k$.

Let $T$ be an \emph{antichain} of Hertzsprung patterns. That is, every
pair of distinct patterns in $T$ are incomparable in the sense that one
is not a Hertzsprung factor of the other. Let
$\Sym_n(T)=\cap_{\tau\in T}\Sym(\tau)$ be the set of \emph{$T$-avoiding
permutations} in $\Sym_n$.  In Section~\ref{enumeration} we show that
there is a rational function $R(x)\in \QQ(x)$ such that
\[
  \sum_{n\geq 0} |\Sym_n(T)|\mskip1mu x^n = \sum_{m\geq 0}m!R(x)^{m}.
\]
Moreover, there is an efficient way of determining $R(x)$ from the set
$T$. We in fact show something stronger.

It is known~\cite{Jackson1977, Jackson1976} that the number of
permutations of $[n]$ with exactly $\ell$ occurrences of $\id_k$ is the
coefficient of $u^{\ell}x^n$ in
\begin{equation}\label{id-jackson}
  \sum_{m\geq 0}m!x^m\left(\frac{1-ux-(1-u)x^{k-1}}{1-ux-(1-u)x^{k}} \right)^m.
\end{equation}
Suppose that $T=\{\tau_1,\tau_2,\dots,\tau_k\}$ is an antichain of
Hertzsprung patterns. We show that there is a rational function
$R(u_1,\dots,u_k; x)$ in $\QQ(u_1,\dots,u_k, x)$ such that
\[
  \sum_{\pi\in\Sym} u_1^{\tau_1(\pi)}u_2^{\tau_2(\pi)}\cdots\mskip1mu u_k^{\tau_k(\pi)} x^{|\pi|}
  = \sum_{m\geq 0}m!R(u_1,u_1,\dots,u_k; x)^{m},
\]
where $\Sym=\cup_n\Sym_n$ and $\tau_i(\pi)$ denotes the number of
occurrences of $\tau_i$ in $\pi$.

In Section~\ref{PRS} we apply the machinery developed in
Section~\ref{enumeration} to the problem of counting permutations up to
certain pattern-replacement equivalences introduced by Linton, Propp,
Roby and West~\cite{Linton2012}. We provide a new formalism that we call
a pattern-rewriting systems, which are similar to the much studied
string-rewriting systems. Using these we solve a couple of open problems
posed by Linton~\etal

\section{Enumeration}\label{enumeration}

Let $T$ be an antichain of Hertzsprung patterns. We will apply the
Goulden-Jackson cluster method~\cite{Goulden1979} to count permutations
with respect to the number of occurrences of patterns in $T$. The
context of the original formulation of this method is the free monoid
over a finite set. The method has, however, been successfully adopted
and applied to permutations by Goulden and Jackson themselves and more
recently by Dotsenko and Khoroshkin~\cite{Dotsenko2013} and Elizalde and
Noy~\cite{Elizalde2012}.

A \emph{marked permutation} is a pair $(\pi,M)$ where $\pi$ is a
permutation and $M$ is a subset of all occurrences in $\pi$ of patterns
from $T$. The members of $M$ are called \emph{marked occurrences} and
$\pi$ is called the \emph{underlying permutation} of $(\pi,M)$. As an
example, let $T=\{123\}$, $\pi=1234567\in \Sym_7$ and $M=\{123,234,567\}$. The
marked permutation $(\pi,M)$ is then depicted below.
\[
  \begin{tikzpicture}[xscale=0.4]
    \node at (1,0) (1) {$1$};
    \node at (2,0) (2) {$2$};
    \node at (3,0) (3) {$3$};
    \node at (4,0) (4) {$4$};
    \node at (5,0) (5) {$5$};
    \node at (6,0) (6) {$6$};
    \node at (7,0) (7) {$7$};
    \draw (2) ellipse (1.43 and 0.3);
    \draw (3) ellipse (1.43 and 0.3);
    \draw (6) ellipse (1.43 and 0.3);
  \end{tikzpicture}
\]
Let $(\pi,M)$ be a marked permutation such that $|\pi|\geq 2$ and $M$
cover $\pi$ in the sense that each letter of $\pi$ is in some
occurrence. We say that $\alpha,\beta\in M$ \emph{overlap} if they have
at least one letter in common. Clearly, being overlapping is a symmetric
relation and we can view $M$ as an undirected graph with edge set
consisting of all $(\alpha,\beta)$ such that $\alpha$ and $\beta$
overlap. If this graph is connected, then we say that $(\pi, M)$ is a
\emph{$T$-cluster}, or simply a \emph{cluster} when $T$ is know from
context. The marked permutation in the example above is not a
cluster, but \hspace{-6pt}
\begin{tikzpicture}[baseline=(1.base), xscale=0.2, yscale=0.5]
  \node at (1,0) (1) {$\scriptstyle 1$};
  \node at (2,0) (2) {$\scriptstyle 2$};
  \node at (3,0) (3) {$\scriptstyle 3$};
  \node at (4,0) (4) {$\scriptstyle 4$};
  \draw (2) ellipse (1.6 and 0.4);
  \draw (3) ellipse (1.6 and 0.4);
\end{tikzpicture}\hspace{-3pt} is.

Let $\pi$ be a permutation of $[k]$ and let
$\sigma_1,\sigma_2,\dots,\sigma_k$ be nonempty permutations. The
\emph{inflation}~\cite{Albert2005} of $\pi$ by
$\sigma_1,\sigma_2,\dots,\sigma_k$ is
$\pi[\sigma_1,\sigma_2,\dots,\sigma_k] =
\sigma_1'\sigma_2'\dots\sigma_k'$, where $\sigma_{\pi^{-1}(i)}'$ is
obtained from $\sigma_{\pi^{-1}(i)}$ by adding the constant
$|\sigma_{\pi^{-1}(1)}|+|\sigma_{\pi^{-1}(2)}|+\dots+|\sigma_{\pi^{-1}(i-1)}|$
to each of its letters.  An example should make this clear,
% inflate([2,3,1],[[1],[2,1,3],[2,1]])
% [[3], [5, 4, 6], [2, 1]]
\begin{equation*}
  231[1,213,21] \,=\, 3\; 546\; 21
\end{equation*}
Note that $\sigma_i'$ is an occurrence of $\sigma_i$ in
$\pi[\sigma_1,\sigma_2,\dots,\sigma_k]$ by construction. The inflation
operation naturally generalizes to marked permutations. Here is a
permutation with some marked occurrences of $123$ and $132$:
% inflate([2,1,3,4,5,6], [[1,2,3,4,5],[1,2,3,5,4],[1],[1],[1,3,2],[1]])
% [[6, 7, 8, 9, 10], [1, 2, 3, 5, 4], [11], [12], [13, 15, 14], [16]]
\begin{multline}\label{big-marked-perm}
\qquad 213456\bigl[
\begin{tikzpicture}[baseline=(1.base), xscale=0.4]
  \node at (1,0) (1) {$1$};
  \node at (2,0) (2) {$2$};
  \node at (3,0) (3) {$3$};
  \node at (4,0) (4) {$4$};
  \node at (5,0) (5) {$5$};
  \draw (2) ellipse (1.44 and 0.3);
  \draw (3) ellipse (1.44 and 0.3);
  \draw (4) ellipse (1.44 and 0.3);
\end{tikzpicture},
\begin{tikzpicture}[baseline=(1.base), xscale=0.4]
  \node at (1,0) (1) {$1$};
  \node at (2,0) (2) {$2$};
  \node at (3,0) (3) {$3$};
  \node at (4,0) (5) {$5$};
  \node at (5,0) (4) {$4$};
  \draw (2) ellipse (1.44 and 0.3);
  \draw (5) ellipse (1.44 and 0.3);
\end{tikzpicture}
,\,1,\,1,
\begin{tikzpicture}[baseline=(1.base), xscale=0.4]
  \node at (1,0) (1) {$1$};
  \node at (2,0) (3) {$3$};
  \node at (3,0) (2) {$2$};
  \draw (3) ellipse (1.44 and 0.3);
\end{tikzpicture}
,1\,\bigr]\\[-0.3ex]
\,=\,
\begin{tikzpicture}[baseline=(6.base), xscale=0.5]
  % 12345
  \node at (1,0) (6) {$6$};
  \node at (2,0) (7) {$7$};
  \node at (3,0) (8) {$8$};
  \node at (4,0) (9) {$9$};
  \node[xshift=-1pt] at (5,0) (10) {$10$};
  \draw (7) ellipse (1.5 and 0.3);
  \draw (8) ellipse (1.5 and 0.3);
  \draw[xshift=1pt] (9) ellipse (1.5 and 0.3);
\end{tikzpicture}
\begin{tikzpicture}[baseline=(1.base), xscale=0.4]
  \node at (1,0) (1) {$1$};
  \node at (2,0) (2) {$2$};
  \node at (3,0) (3) {$3$};
  \node at (4,0) (5) {$5$};
  \node at (5,0) (4) {$4$};
  \draw (2) ellipse (1.5 and 0.3);
  \draw (5) ellipse (1.5 and 0.3);
\end{tikzpicture}
\,11\hspace{3pt}12\,
\begin{tikzpicture}[baseline=(1.base), xscale=0.53]
  \node at (1,0) (13) {$13$};
  \node at (2,0) (15) {$15$};
  \node at (3,0) (14) {$14$};
  \draw (15) ellipse (1.55 and 0.33);
\end{tikzpicture}
\,16\qquad
\end{multline}
It is clear that a marked permutation is a cluster if and only if its
underlying permutation has length at least 2 and it cannot be expressed
as the inflation of two or more nonempty marked permutation. Moreover,
any marked permutation can be uniquely written
$\pi\bigl[(\sigma_1,M_1),(\sigma_2,M_2),\dots,(\sigma_k,M_k)\bigr]$
where $\pi$ is a permutation and each $(\sigma_i,M_i)$ is either
$(1,\emptyset)$ or a cluster.

For each nonempty permutation $\pi$ let there be an associate
indeterminate $u_\pi$ and let
$\uu=(u_1,u_{12},u_{21},u_{123},u_{132},\dots)$ be the sequence
of all such indeterminates. Let $T$ be an antichain of Hertzsprung
patterns.  For any marked permutation $(\pi,M)$, define the monomial
\[
  \uu_M = \prod_{\alpha\in M}u_{\st(\alpha)},
\]
where $\st(\alpha)$ denotes the standardization of $\alpha$. That is,
$\st(\alpha)$ is the permutation of $\{1,2\dots,|\alpha|\}$ obtained
from $\alpha$ by replacing its smallest letter by $1$, its next smallest
letter by $2$, etc. If $M$ is the set of marked occurrences
in~\eqref{big-marked-perm} then $\uu_M=u_{123}^4u_{132}^2$. The
\emph{cluster generating function} associated with $T$ is defined by
\[
  C(\uu; x) =
  \sum_{(\pi,M)}\mskip-1mu \uu_M\mskip1mu x^{|\pi|},
\]
where the sum is over all $T$-clusters. Similarly, let
$F(\uu; x) = \sum_{(\pi,M)}\mskip-1mu \uu_M\mskip1mu x^{|\pi|}$, where
the sum is over all marked permutations. Further, let
\[
  \uu^{T(\pi)} = \prod_{\tau\in T}u_{\tau}^{\tau(\pi)},
\]
where $\tau(\pi)$ denotes the number of occurrences of $\tau$ in
$\pi$. Then
\[
  \sum_{\pi\in\Sym}x^{|\pi|} (\one+\uu)^{T(\pi)}
  = \fsum{x + C(\uu; x)}.
\]
Indeed, the left-hand side is the generating function $F(\uu; x)$ of
marked permutations: for each $\tau\in T$ and for each of the
$\tau(\pi)$ occurrences of $\tau$ in $\pi$ there is a choice to be made,
mark it with a $u_\tau$ or leave it unmarked. That the right-hand side
equals $F(\uu; x)$ follows from the unique representation of marked
permutations as the inflation of a permutation with clusters and
$(1,\emptyset)$. On replacing $\uu$ by $\uu-\one$ we have the following
result.

\begin{theorem}\label{cluster-thm}
  Let $T$ be an antichain of Hertzsprung patterns. Then
  \[
    \sum_{\pi\in\Sym}\uu^{T(\pi)}x^{|\pi|}
    = \fsum{x + C(\uu - \one; x)}.
  \]
  In particular,
  \[
    \sum_{n\geq 0}|\Sym_n(T)|\mskip1mu x^n = \fsum{x + C(\mathbf{-1}; x)}.
  \]
\end{theorem}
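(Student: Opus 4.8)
The plan is to establish the master identity
\[
  \sum_{\pi\in\Sym}x^{|\pi|}(\one+\uu)^{T(\pi)} = \fsum{x + C(\uu;x)},
\]
which is stated in the prose immediately preceding the theorem, and then obtain Theorem~\ref{cluster-thm} by the substitution $\uu\mapsto\uu-\one$. So the real content lies in proving this preliminary identity, and I would present it as the body of the proof. First I would verify the combinatorial meaning of the left-hand side: a marked permutation is precisely a permutation $\pi$ together with an independent binary choice (mark/leave unmarked) for each of the $\tau(\pi)$ occurrences of each $\tau\in T$. Summing the weight $\uu_M x^{|\pi|}$ over all such choices, the occurrences of a fixed $\tau$ contribute the factor $(1+u_\tau)^{\tau(\pi)}$, so summing over all marked permutations gives $F(\uu;x)=\sum_{\pi}x^{|\pi|}\prod_{\tau\in T}(1+u_\tau)^{\tau(\pi)} = \sum_{\pi}x^{|\pi|}(\one+\uu)^{T(\pi)}$, which is the left-hand side.

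Next I would prove that $F(\uu;x)$ equals the right-hand side by exploiting the unique-decomposition statement established just above the theorem: every marked permutation factors uniquely as an inflation $\pi[(\sigma_1,M_1),\dots,(\sigma_k,M_k)]$ where $\pi\in\Sym_m$ (for some $m\ge 0$) and each $(\sigma_i,M_i)$ is either the trivial singleton $(1,\emptyset)$ or a cluster. The key observation is that $\uu_M$ and $x^{|\pi|}$ are both multiplicative over inflation: the marked set $M$ is the disjoint union of the $M_i$ (translated), so $\uu_M=\prod_i\uu_{M_i}$, and the length is additive, $|\pi[\dots]|=\sum_i|\sigma_i|$. Since the skeleton $\pi$ ranges freely over $\Sym_m$ and the $m$ slots are filled independently, each slot contributes the generating function $x + C(\uu;x)$ (the monomial $x$ accounting for the choice $(1,\emptyset)$ and $C$ for the clusters). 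Summing over all $m\ge 0$ and over the $m!$ choices of skeleton in $\Sym_m$ yields $\sum_{m\ge 0}m!\,(x+C(\uu;x))^m=\fsum{x+C(\uu;x)}$, as desired.

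Finally, substituting $\uu\mapsto\uu-\one$ turns the left-hand side into $\sum_{\pi}\uu^{T(\pi)}x^{|\pi|}$ and the right-hand side into $\fsum{x+C(\uu-\one;x)}$, giving the first display of the theorem; setting every $u_\tau=0$, i.e.\ $\uu=\one$ with the shift so that $\uu-\one=\mathbf{-1}$, makes $\uu^{T(\pi)}$ vanish unless $T(\pi)=0$, so the left-hand side collapses to $\sum_{n\ge0}|\Sym_n(T)|x^n$, yielding the ``in particular'' statement. The main obstacle I anticipate is not any single hard computation but rather the careful bookkeeping needed to justify that the unique inflation decomposition is genuinely \emph{weight-preserving}, in particular that translating the sub-permutations does not disturb the standardization classes recorded by $\uu_M$ and that every cluster arises in exactly one slot. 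Once that multiplicativity is nailed down, the identity follows formally from summing a product of independent geometric-type contributions over the symmetric-group skeletons.
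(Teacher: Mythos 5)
Your proposal is correct and takes essentially the same route as the paper: the paper's proof consists precisely of the prose identity $\sum_{\pi\in\Sym}x^{|\pi|}(\one+\uu)^{T(\pi)}=\fsum{x+C(\uu;x)}$, justified by the mark/leave-unmarked choice for each occurrence and by the unique inflation decomposition into clusters and $(1,\emptyset)$, followed by the substitution $\uu\mapsto\uu-\one$ (and $u_\tau=0$ for the avoidance statement). You simply spell out the weight-multiplicativity bookkeeping that the paper leaves implicit.
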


If $T=\{\tau\}$ and $\tau$ is a non-self-overlapping pattern, then the
only $T$-cluster is $(\tau, \{\tau\})$ and $C(u,x)=ux^{|\tau|}$, where
$u = u_{\tau}$. Hence we arrive at the following generating function
version of Myers's formula~\eqref{Myers-formula}.

\begin{corollary}
  For any non-self-overlapping pattern $\tau$,\smallskip
  \[
    \sum_{\pi\in\Sym}u^{\tau(\pi)}x^{|\pi|}
    \;=\; \fsum{x + (u-1)x^{|\tau|}}.
  \]
\end{corollary}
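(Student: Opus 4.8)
The plan is to derive this corollary directly from Theorem~\ref{cluster-thm} by computing the cluster generating function $C(u;x)$ for the special case $T=\{\tau\}$ with $\tau$ non-self-overlapping, and then substituting $u\mapsto u-1$. So the work reduces to a single claim: when $\tau$ is non-self-overlapping, the only $T$-cluster is the marked permutation $(\tau,\{\tau\})$, consisting of the pattern itself with its unique obvious occurrence marked.

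First I would recall what a cluster is in this setting. A $T$-cluster is a marked permutation $(\pi,M)$ whose underlying permutation has length at least $2$, whose marked occurrences cover every letter of $\pi$, and whose overlap graph is connected. For $T=\{\tau\}$ every element of $M$ is an occurrence of $\tau$, hence has length $k=|\tau|$. The key step is to show that connectivity of the overlap graph forces $|M|=1$. Suppose for contradiction that $|M|\geq 2$. Since the graph is connected, there exist two distinct marked occurrences $\alpha,\beta\in M$ that overlap, i.e.\ share at least one letter. Because $\alpha$ and $\beta$ are both factors of $\pi$ of the same length $k$ and they overlap but are distinct, their union is a factor of $\pi$ of length strictly between $k$ and $2k$; writing $\sigma$ for the standardization of this union, one sees that $\tau=\st(\alpha)$ is simultaneously a proper Hertzsprung prefix and a proper Hertzsprung suffix of $\sigma$, where $|\sigma|<2k$. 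This is precisely the definition of $\tau$ being self-overlapping, contradicting our hypothesis. Hence $|M|=1$, say $M=\{\alpha\}$, and then covering forces $\pi=\alpha$, so after standardizing we get exactly $(\tau,\{\tau\})$.

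The main obstacle is making the overlap argument fully rigorous: I must verify that two distinct length-$k$ factors of a permutation that share a letter genuinely produce an overlap witness of the self-overlapping kind, rather than merely sharing a value in a way that does not yield a prefix/suffix coincidence. The care needed is that ``sharing a letter'' in a permutation means occupying a common position (factors are contiguous), so two distinct factors of equal length that share a position must be offset by some $1\leq d\leq k-1$; their overlap then consists of the last $k-d$ letters of one and the first $k-d$ letters of the other, and the condition that both standardize to $\tau$ is exactly what self-overlappingness encodes on the combined factor. Once this is settled, the rest is immediate: the cluster generating function is the single monomial $\uu_M\,x^{|\pi|}=u\,x^{k}$ with $u=u_\tau$ and $k=|\tau|$, so $C(u;x)=ux^{|\tau|}$.

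Finally I would conclude by invoking Theorem~\ref{cluster-thm} with this value of $C$. Replacing $\uu$ by $\uu-\one$ turns $C(u;x)=ux^{|\tau|}$ into $C(u-1;x)=(u-1)x^{|\tau|}$, and substituting into the theorem gives
\[
  \sum_{\pi\in\Sym}u^{\tau(\pi)}x^{|\pi|}
  = \fsum{x + (u-1)x^{|\tau|}},
\]
which is the claimed identity. As a sanity check, setting $u=0$ recovers the avoidance count and, upon extracting coefficients, reproduces Myers's formula~\eqref{Myers-formula}, confirming that the single-cluster computation is correct.
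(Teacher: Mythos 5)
Your proof is correct and takes essentially the same route as the paper: the paper also derives this corollary from Theorem~\ref{cluster-thm} by noting that for a non-self-overlapping $\tau$ the only $T$-cluster is $(\tau,\{\tau\})$, whence $C(u;x)=ux^{|\tau|}$. The only difference is that the paper states this single-cluster fact in one sentence without proof, while you supply the (correct) argument that two distinct overlapping occurrences would yield a factor of length strictly between $|\tau|$ and $2|\tau|$ witnessing self-overlap.
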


\begin{example}\label{Ex132}
  The distribution of the number of
  occurrences of $132$ is given by
  \begin{align*}
    \sum_{\pi\in\Sym}u^{132(\pi)}x^{|\pi|}
    &= \fsum{x + (u-1)x^3}.
  \end{align*}
  Letting $u=0$ we get a generating function for $132$-avoiding
  permutations:
  \[
    \sum_{n\geq 0}|\Sym_n(132)|\mskip1mu x^n =
    \fsum{x-x^3}.
%    = \fsum{x(1-x)(1+x)}.
  \]
  It follows that
  \begin{equation}\label{Myers132}
    |\Sym_n(132)| = \sum_{i=0}^{\floor{n/3}}(-1)^i(n-2i)!\binom{n-2i}{i},
  \end{equation}
  which is the case $\tau=132$, $m=0$ and $k=3$ of Myers's
  formula~\eqref{Myers-formula}.
\end{example}

\begin{example}\label{Ex123}
  For $T=\{123\}$ the underlying permutation of any cluster is $\id_k$ for
  some $k\geq 3$. A cluster of length $k$ is built from clusters of length
  $k-2$ or length $k-1$ by marking the suffix $(k-2,k-1,k)$ of
  $\id_k$. Formally, if $\C_k$ denotes the set of clusters whose
  underlying permutation is $\id_k$ then $\C_2=\emptyset$,
  $\C_3=\{(123,\{123\})\}$, and $\C_k$ consists of all clusters
  \[
    \bigl(\,\id_k,\, M\cup \{(k-2,k-1,k)\}\,\bigr),
  \]
  where $(\id_{k-1},M)\in \C_{k-1}$ or $(\id_{k-2},M)\in \C_{k-2}$. It follows that
  \[
    C(u, x) = ux^3 + uxC(u, x) +ux^2C(u, x),
  \]
  in which $u=u_{123}$ tracks marked factors and $x$ tracks the length
  of the cluster. Thus the number of clusters of fixed length is a
  Fibonacci number, the generating function for clusters is
  $C(u, x) = ux^3/(1-u(x+x^2))$, and we have rediscovered a special
  cases of~\eqref{id-jackson}:
  \begin{align*}
    \sum_{\pi\in\Sym}u^{132(\pi)}x^{|\pi|}
    &= \fsum{x + C(u-1, x)} \\
    &= \fsum{x + \frac{(u-1)x^3}{1-(u-1)(x+x^2)}}.
  \end{align*}
  % In particular,
  % \[
  %   \sum_{n\geq 0}|\Sym_n(123)|\mskip1mu x^n =
  %   \fsum{\frac{x - x^3}{1 - x^3}}.
  % \]
\end{example}

% \subsection{$1324$-free permutations}
% Overlaps can only occur between a ``4'' and a ``1'' in $1324$ as in
% $1324657$. Thus $C(u,x)=ux^4/(1-ux^3)$ and
% \[
%   F\left(x+C(u-1,x)\right) = \fsum{x + \frac{(u-1)x^4}{1-(u-1)x^3}}
% \]
%
Having seen a couple of examples we now return to the general case. Let
$T$ be any antichain of Hertzsprung patterns.  We will show that the
cluster generating function associated with $T$ is counting walks in a
digraph and thus it is rational. This can be seen as an application of
the transfer-matrix method~\cite{EC1}.

For $\sigma,\tau\in T$ define the set of permutations
$\olap(\sigma,\tau)$ by stipulating that $\pi\in \olap(\sigma,\tau)$ if
and only if
\begin{itemize}
\item $\sigma$ is a proper Hertzsprung prefix of $\pi$,
\item $\tau$ is a proper Hertzsprung suffix of $\pi$ and
\item $|\pi| < |\sigma| + |\tau|$.
\end{itemize}
If $\olap(\sigma,\tau)$ is nonempty, then we say that $\sigma$ and
$\tau$ \emph{overlap}. In particular, $\olap(\tau,\tau)$ is nonempty if
and only if $\tau$ is self-overlapping. Let
\[
  \Omega(\sigma,\tau) = \sum_{\pi\in \olap(\sigma,\tau)} x^{|\pi|-|\sigma|}.
\]
Construct an edge weighted digraph $D_T$ with vertices
$V = \{\eps\} \cup T$, edge set $V\times V$ and weight function
$w:V\times V\to \QQ[\uu, x]$ defined by
\[
  w(\sigma,\tau) =
  \begin{cases}
    0 & \text{if $\tau = \eps$,}\\
    u_{\tau}x^{|\tau|} & \text{if $\sigma = \eps$,}\\
    u_{\tau}\Omega(\sigma,\tau) &\text{otherwise.}
  \end{cases}
\]

\begin{example}\label{Ex123_132}
  Let $T = \{123, 132\}$. We have $\olap(132,123) =\olap(132,132)=\emptyset$,
  \[\olap(123,123) =\{1234,12345\}\quad\text{and}\quad\olap(123,132) =\{12354\}.
  \]
  For ease of notation, let $u=u_{123}$ and $v=u_{132}$. The digraph
  $D_T$ together with its adjacency matrix are depicted below.
  \[
    \begin{tikzpicture}[baseline=2.7em]
      \node at (0,2) (123) {$123$};
      \node at (3,2) (132) {$132$};
      \node at (1.5,0) (eps) {$\eps$};
      \draw[->] (eps) -- (123) node[midway, left=2pt, yshift=-2pt] {$ux^3$};
      \draw[->] (eps) -- (132) node[midway, right=1pt, yshift=-2pt] {$vx^3$};
      \draw[->] (123) -- (132) node[midway, above=3pt] {$vx^2$};
      \path (123) edge[->,in=160,out=200,looseness=6] node[left] {$ux+ux^2$} (123);
    \end{tikzpicture}\qquad
    \begin{bmatrix}
      \;0 & u x^3 & v x^3\; \\
      \;0 & ux+ux^2 & vx^2\; \\
      \;0 & 0 & 0\;
    \end{bmatrix}\qquad
  \]
  Here, and in what follows, edges that are known to have weight 0 are omitted.
  % \begin{tikzpicture}[node distance=10em, scale=0.9, baseline=2.5em]
  %   \node at (0,2.4) (sigma) {$\sigma$};
  %   \node at (4,2.4) (tau) {$\tau$};
  %   \node at (2,0) (eps) {$\eps$};
  %   \draw[->] (eps) -- (sigma) node[midway, left=1pt, yshift=-4pt] {$u_\sigma x^{|\sigma|}$};
  %   \draw[->] (eps) -- (tau) node[midway, right=1pt, yshift=-4pt] {$u_\tau x^{|\tau|}$};
  %   \draw[->] (sigma) edge[bend left=14pt] (tau) node[xshift=51pt, yshift=18pt] {$w(\sigma,\tau)$};
  %   \draw[->] (tau) edge[bend left=14pt] (sigma) node[xshift=-51pt, yshift=-18pt] {$w(\tau,\sigma)$};
  %   \path (sigma) edge[->,in=150,out=210,looseness=7] node[left] {$w(\sigma,\sigma)$} (sigma);
  %   \path (tau) edge[->,in=30,out=-30,looseness=7] node[right] {$w(\tau,\tau)$} (tau);
  % \end{tikzpicture}
\end{example}

\begin{theorem}\label{transfer-thm}
  Let $T$ be an antichain of Hertzsprung patterns.  Let $A$ be the
  adjacency matrix of the digraph $D_T$ defined above. Then
  \[
    C(\uu; x) \,=\,
    \frac{1}{\det(1-A)}
    \sum_{i=1}^{|T|}(-1)^{i}\det\bigl(1-A: i+1, 1 \bigr),
  \]
  where $(B:i,j)$ denotes the matrix obtained by removing the $i$th row
  and $j$th column of $B$. In particular, $C(\uu; x)$ belongs to the
  field of rational functions $\QQ(x)(u_\tau: \tau\in T)$.
\end{theorem}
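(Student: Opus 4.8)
The plan is to set up a weight-preserving bijection between $T$-clusters and nonempty walks in $D_T$ that start at $\eps$ and end at a pattern vertex, and then read off $C(\uu;x)$ from $(1-A)^{-1}$ using the transfer-matrix method. First I would analyse the combinatorial structure of a cluster $(\pi,M)$. List the marked occurrences $\beta_1,\dots,\beta_r$ of $M$ by increasing starting position. Because $T$ is an antichain, no occurrence can be a Hertzsprung factor of another, so as intervals of positions the $\beta_j$ form an inclusion-free family; hence ordering by left endpoint coincides with ordering by right endpoint, and all endpoints are distinct. Since $M$ covers $\pi$ and its overlap graph is connected, consecutive occurrences $\beta_j,\beta_{j+1}$ must share at least one position (otherwise a gap or a disconnection would occur). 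Thus a cluster is exactly a chain in which $\beta_1$ is a Hertzsprung prefix of $\pi$, $\beta_r$ a Hertzsprung suffix, and each $\beta_{j+1}$ properly overlaps its predecessor. Setting $\tau_j=\st(\beta_j)\in T$ and letting $\rho_j$ be the standardization of the factor of $\pi$ spanning $\beta_j$ and $\beta_{j+1}$, the no-containment property forces $\rho_j\in\olap(\tau_j,\tau_{j+1})$, so the chain determines the walk $\eps\to\tau_1\to\cdots\to\tau_r$ together with a choice of $\rho_j$ on each edge.

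Next I would verify that weights match. The edge $\eps\to\tau_1$ carries $u_{\tau_1}x^{|\tau_1|}$, and the edge $\tau_j\to\tau_{j+1}$ contributes the term $u_{\tau_{j+1}}x^{|\rho_j|-|\tau_j|}$ of $u_{\tau_{j+1}}\Omega(\tau_j,\tau_{j+1})$. Since the newly created positions telescope as $|\tau_1|+\sum_j(|\rho_j|-|\tau_j|)=|\pi|$, the product of edge weights along the walk is exactly $\uu_M\,x^{|\pi|}$. Summing over all clusters therefore equals summing the weights of all nonempty walks from $\eps$ to a pattern vertex, which is precisely what the weight function $w$ and the overlap polynomials $\Omega$ were designed to record. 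The main obstacle lies here: one must check that reconstructing $\pi$ by gluing only consecutive overlaps is well defined and bijective, even when non-consecutive occurrences share positions. This is where the rigidity of Hertzsprung patterns is essential, since each occurrence spans a block of consecutive positions and a block of consecutive values; the values in any shared region are thereby forced, and the local gluing data $(\tau_j,\tau_{j+1},\rho_j)$ reconstruct the cluster uniquely.

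Finally I would invoke the transfer-matrix method. The generating function for all walks from a vertex $s$ to a vertex $t$ in $D_T$ is the $(s,t)$ entry of $(1-A)^{-1}=\sum_{n\ge 0}A^{n}$, the empty walk being counted only when $s=t$. As $\eps$ has no incoming edges, every walk from $\eps$ to a pattern vertex is nonempty, so $C(\uu;x)=\sum_{t\in T}\bigl[(1-A)^{-1}\bigr]_{\eps,t}$. Indexing the vertices so that $\eps$ is $1$ and the $i$th pattern is $i+1$, the adjugate formula for the inverse gives $\bigl[(1-A)^{-1}\bigr]_{1,i+1}=(-1)^{1+(i+1)}\det(1-A:i+1,1)/\det(1-A)=(-1)^{i}\det(1-A:i+1,1)/\det(1-A)$, and summing over $i=1,\dots,|T|$ yields the stated identity. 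Rationality is then immediate: each $\Omega(\sigma,\tau)$ is a finite sum because $|\pi|<|\sigma|+|\tau|$ bounds the admissible overlaps, so every entry of $A$ lies in $\QQ[\uu,x]$, the two determinants are polynomials, and $C(\uu;x)$ is their quotient, hence an element of $\QQ(x)(u_\tau:\tau\in T)$.
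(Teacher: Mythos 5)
Your proposal is correct and takes essentially the same route as the paper: identify clusters with weighted walks in $D_T$ from $\eps$ to a pattern vertex, then extract the generating function from $(1-A)^{-1}$ via the transfer-matrix method and Cramer's rule. You in fact supply more detail than the paper's own proof, which simply asserts the cluster--walk correspondence ``by construction of $D_T$''; the one point you flag but do not fully resolve---that gluing consecutive overlaps always yields a genuine permutation, with no value collision between non-consecutive occurrences---is likewise left implicit in the paper.
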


\begin{proof}
  The generating function for (weighted) directed walks in $D_T$
  starting at $\eps$ and ending in any other vertex equals $C(\uu;x)$ by
  construction of $D_T$. Thus $C(\uu;x)$ is given by summing all but the
  first entry of the first row of
  \[
    (1-A)^{-1} =  1 + A + A^2 + A^3 + \cdots
  \]
  If $B$ is any invertible matrix, then Cramer's rule gives
  \[
    (B^{-1})_{ij}=(-1)^{i+j}\det(B;j,i)/\det(B),
  \] from which the result follows.
\end{proof}

By applying Theorem~\ref{transfer-thm} to $T=\{132\}$ and $T=\{123\}$ we
find that the cluster generating functions are respectively $ux^3$ and
$ux^3/(1-u(x+x^2))$, which agree with examples~\ref{Ex132}
and~\ref{Ex123}. More generally, we have the following corollary.

\begin{corollary}\label{single-pattern}
  For any Hertzsprung pattern $\tau$,
  \[
    \sum_{\pi\in\Sym}u^{\tau(\pi)}x^{|\pi|} =
    \fsum{x +
      \frac
      {(u-1)x^{|\tau|}}
      {1-(u-1)\Omega(\tau,\tau)}
    }.
  \]
\end{corollary}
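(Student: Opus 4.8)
The plan is to specialize Theorems~\ref{cluster-thm} and~\ref{transfer-thm} to the singleton antichain $T=\{\tau\}$ and compute directly; no new idea is needed beyond those two results. When $T=\{\tau\}$ the digraph $D_T$ has only the two vertices $\eps$ and $\tau$, so its adjacency matrix $A$ is $2\times 2$. Reading off the weight function $w$ with $u=u_\tau$, I record $w(\eps,\eps)=0$ and $w(\tau,\eps)=0$ (both land in the $\tau=\eps$ case), $w(\eps,\tau)=ux^{|\tau|}$, and $w(\tau,\tau)=u\,\Omega(\tau,\tau)$, giving
\[
  A=\begin{bmatrix} 0 & ux^{|\tau|} \\ 0 & u\,\Omega(\tau,\tau) \end{bmatrix},
  \qquad
  1-A=\begin{bmatrix} 1 & -ux^{|\tau|} \\ 0 & 1-u\,\Omega(\tau,\tau) \end{bmatrix}.
\]

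Next I apply Theorem~\ref{transfer-thm} with $|T|=1$, so the sum over $i$ collapses to the single term $i=1$. Here $\det(1-A)=1-u\,\Omega(\tau,\tau)$, and deleting the second row and first column of $1-A$ leaves the $1\times 1$ minor $[\,-ux^{|\tau|}\,]$, whose determinant is $-ux^{|\tau|}$. Substituting into the formula, the two minus signs cancel and I obtain
\[
  C(u;x)=\frac{-1}{1-u\,\Omega(\tau,\tau)}\bigl(-ux^{|\tau|}\bigr)
         =\frac{ux^{|\tau|}}{1-u\,\Omega(\tau,\tau)}.
\]
Finally I invoke Theorem~\ref{cluster-thm}, which expresses the distribution as $\fsum{x+C(\uu-\one;x)}$. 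Replacing $u$ by $u-1$ in the cluster generating function just computed yields $C(u-1;x)=(u-1)x^{|\tau|}/\bigl(1-(u-1)\Omega(\tau,\tau)\bigr)$, and plugging this in produces exactly the claimed identity.

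I do not expect a genuine obstacle, since the content is already packaged in Theorems~\ref{cluster-thm} and~\ref{transfer-thm}; the only point needing care is the minor bookkeeping in the transfer-matrix formula. One must correctly read off that removing the $(i{+}1)$st row and first column for $i=1$ means deleting the $\tau$-row and the $\eps$-column, and track the sign $(-1)^i$ so the cancellation above is valid. As a sanity check I would confirm consistency with the earlier computations: setting $\Omega(\tau,\tau)=0$ reduces $C$ to $ux^{|\tau|}$, recovering the non-self-overlapping corollary and, in particular, the $132$ case of Example~\ref{Ex132}; and using $\Omega(123,123)=x+x^2$ (from $\olap(123,123)=\{1234,12345\}$ in Example~\ref{Ex123_132}) reproduces the Fibonacci-type generating function $ux^3/\bigl(1-u(x+x^2)\bigr)$ of Example~\ref{Ex123}.
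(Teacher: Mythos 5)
Your proposal is correct and is essentially the paper's own (implicit) argument: the corollary is stated as the direct specialization of Theorem~\ref{transfer-thm} to the singleton antichain $T=\{\tau\}$, followed by the substitution $u\mapsto u-1$ from Theorem~\ref{cluster-thm}, and your $2\times 2$ matrix computation and sign bookkeeping carry this out accurately. Your sanity checks (recovering $ux^{|\tau|}$ when $\Omega(\tau,\tau)=0$ and $ux^3/\bigl(1-u(x+x^2)\bigr)$ for $\tau=123$) match the paper's examples exactly.
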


\begin{example}
  Hertzsprung's problem generalized to counting permutations with
  respect to the joint distribution of the Hertzsprung
  patterns $12$ and $21$ has the solution
  % \[
  %   \sum_{\pi\in\Sym}u^{12(\pi)}v^{21(\pi)}x^{|\pi|} \;=\;
  %   \fsum{\frac
  %     {x - x^3 + ux^3 + vx^3 - uvx^3}
  %     {(1+x-ux)(1+x-vx)}}.
  % \]
  \[
    \sum_{\pi\in\Sym}u^{12(\pi)}v^{21(\pi)}x^{|\pi|} \;=\;
    \fsum{\frac
      {x - (u-1)(v-1)x^3}
      {(1+x-ux)(1+x-vx)}}.
  \]
  More generally, the joint distribution of $\id_k$ and $\rid_{\ell}$
  is given by
  \[
    % \sum_{\pi\in\Sym}u^{\id_k(\pi)}v^{\rid_{\ell}(\pi)}x^{|\pi|} =
    \fsum{x \,+\,
      \frac{(u-1)x^k}{1 - (u-1)(\bracket{k}_x-1)} \,+\,
      \frac{(v-1)x^\ell}{1 - (v-1)(\bracket{\ell}_x-1)}\,
    },
  \]
  where $\bracket{k}_x=(1-x^{k})/(1-x) = 1+x+\cdots+x^{k-1}$.
  % A = matrix([[0,q[0]*x^k,q[1]*x^l],[0,q[0]*(1-x^k)/(1-x),0],[0,0,q[1]*(1-x^l)/(1-x)]])
  Generalizing Hertzsprung's problem in another direction we find that the
  generating function for permutations avoiding all patterns in $\Sym_3$
  is
  \begin{gather*}
    % \sum_{n\geq 0}|\Sym_n(\Sym_3)|\mskip1mu x^n =
    \fsum{
      \frac
      {-2x^6 + 3x^5 - 3x^4 - 5x^3 + x^2 + x}
      {x^4 + x^3 - x^2 - x - 1}
    } \\
    = 1 + x + 2x^{2} + 4x^{4} + 34x^{5} + 298x^{6} + 2434x^{7} + 21374x^{8}\\
     + 205300x^{9} + 2161442x^{10}+ 24804386x^{11} + \cdots
  \end{gather*}
\end{example}

% \[
%   % (The u's are missing in the figure)
%   \tikzset{every loop/.style={->,in=60,out=120,looseness=7}}
%   \begin{tikzpicture}[node distance=6em]
%     \node at (0,2) (12) {$12$};
%     \node at (2,2) (21) {$21$};
%     \node at (1,0) (eps) {$\eps$};
%     \draw[->] (eps) -- (12) node[midway, left=3pt] {$x^2$};
%     \draw[->] (eps) -- (21) node[midway, right=3pt] {$x^2$};
%     \path (12) edge[loop] node[above] {$x$} (12);
%     \path (21) edge[loop] node[above] {$x$} (21);
%   \end{tikzpicture}
%   \qquad
%   \begin{tikzpicture}[node distance=6em]
%     \node at (0,2) (123) {$123$};
%     \node at (0,0) (eps) {$\eps$};
%     \draw[->] (eps) -- (123) node[midway, right=3pt] {$x^3$};
%     \path (123) edge[loop] node[above] {$\,x+x^2$} (123);
%   \end{tikzpicture}
%   \qquad
%   \begin{tikzpicture}[node distance=6em]
%     \node at (0,2) (132) {$132$};
%     \node at (0,0) (eps) {$\eps$};
%     \draw[->] (eps) -- (132) node [midway, right=3pt] {$x^3$};
%   \end{tikzpicture}\quad
% \]

Theorem~\ref{cluster-thm} together with Theorem~\ref{transfer-thm}
solves the problem of determining the distribution of a set of
Hertzsprung patterns. Or, at least they solve the problem as long as the
patterns are short. The weight function is defined in terms of the sets
$\olap(\sigma,\tau)$ and those sets are a priori expensive to compute.
We shall now detail why, in fact, the weight function can be efficiently
computed.

The polynomials $\Omega(\sigma,\tau)$ are akin to the correlation polynomials
of Guibas and Odlyzko~\cite{Guibas1981}. Consider
$\pi\in \olap(\sigma,\tau)$. Let $\delta$ be the factor of $\pi$ in
which its prefix corresponding to $\sigma$ intersects with its suffix
corresponding to $\tau$. The permutation $978563421$ is, for instance, a
member of $\olap(53412, 563421)$ and $\delta=5634$. Let
$i=|\delta|$. Note that the smallest element in $\delta$ is either
$|\sigma|-i+1$ or $|\tau|-i+1$. Indeed, either $\sigma$ is a (literal)
prefix of $\pi$ or $\tau$ is a (literal) suffix of $\pi$. In the first
case, $\min\delta=|\sigma|-i+1$. In the second case,
$\min\delta=|\tau|-i+1$. Thus we arrive at the formula
\[
  \Omega(\sigma,\tau)
  = \sum_{i\geq 1}\,\chi_i(\sigma,\tau)x^{|\tau|-i},
\]
where $\chi_i(\sigma,\tau)\in \{0,1\}$ is computed as follows. Consider
the last $i$ letters $a_1a_2\dots a_i$ of $\sigma$ and the first $i$
letters $b_1b_2\dots b_i$ of $\tau$. If the differences $a_1-b_1$,
$a_2-b_2$, \dots, $a_i-b_i$ are all equal and
their common difference is $|\sigma|-i$ or $-(|\tau|-i)$, then
$\chi_i(\sigma,\tau)=1$; otherwise, $\chi_i(\sigma,\tau)=0$. We can
organize this computation in a table as in the following example.
\newcommand{\z}{\phantom{0}}
\[
\begin{array}{lll}
  \sigma: & 53412 & \\
  \tau: & \z 563421 & 1 \\
  & \z\z 563421 & 0 \\
  & \z\z\z 563421 & 1 \\
  & \z\z\z\z 563421 & 0 \\
\end{array}\smallskip
\]
Writing $\chi_j$ for $\chi_j(\sigma,\tau)$ we have
$(\chi_1,\chi_2,\chi_3,\chi_4)=(0,1,0,1)$ and
$\Omega(\sigma,\tau)=x^2+x^4$.

\begin{example}
  Penney's game~\cite{Penney1969, Guibas1981} is a well-known coin
  tossing game played by two players. Player~I picks a sequence of
  heads and tails, say $\alpha\in \{H,T\}^k$. Player~I shows $\alpha$
  to Player~II, who subsequently picks a word $\beta\in \{H,T\}^k$. A
  fair coin is tossed until either $\alpha$ or $\beta$ appears as a
  consecutive subsequence of outcomes. The player whose sequence
  appears first wins. Thus a winning sequence of outcomes for Player I
  is a word $\omega\in \{H,T\}^n$ such that $\alpha$ is a suffix of
  $\omega$ and $\omega$ otherwise avoid both $\alpha$ and
  $\beta$. There is a simple formula~\cite{Guibas1981}[Formula (1.5)],
  due to Conway~\cite{Gardner1974}, expressing the odds that Player~II
  will win in terms of correlation polynomials. Further, this game has
  the curious property of being nontransitive: Assuming $k\geq 3$, no
  matter what $\alpha$ Player~I picks, Player~II can pick $\beta$ so
  that his probability of winning exceeds $1/2$. Consider the
  following related question for permutations. Let $T$ be an antichain
  of Hertzsprung patterns and fix a pattern $\alpha\in T$. How many
  permutations in $\Sym_n$ avoid $T$ except for a single occurrence of
  $\alpha$ at the end of the permutation? Let us call this number
  $f^{\alpha}(n)$. Let $F^{\alpha}(\uu; x)$ be the generating function
  for marked permutations that end in an unmarked occurrence of
  $\alpha$. Then
  \[
    F^{\alpha}(\uu;x) =
    C^{\alpha}(\uu;x)\sum_{m\geq 1}m!\left(x + C(\uu; x)\right)^{m-1},
  \]
  where $C(\uu;x)$ is the usual cluster generating function and
  $C^{\alpha}(\uu;x)$ is the generating function for marked
  permutations $(\pi,M)$ such that the last $k$ letters
  $\gamma=\pi(n-k+1)\ldots\pi(n-1)\pi(n)$ of $\pi$ form an unmarked
  occurrence of $\alpha$ and $(\pi,M\cup\{\gamma\})$ is a
  $T$-cluster. By construction,
  \[
    F^{\alpha}(\mathbf{-1}; x) = \sum_{n\geq 0}f^{\alpha}(n)x^n.
  \]
  This formula is, however, only useful if we can efficiently compute
  $C^{\alpha}(\uu;x)$.  Define the digraph $D_T^{\alpha}$ with
  vertices $V=\{\eps, \hat\alpha\}\cup T$, in which $\hat\alpha$ is a
  distinct copy of $\alpha$, and weight function
  $\hat w: V\times V\to \QQ[\uu, x]$ given by
  \[
    {\hat w}(\sigma, \tau) =
    \begin{cases}
      w(\sigma, \tau) & \text{if $\sigma\neq\hat\alpha$ and $\tau\neq\hat\alpha$}, \\
      0 & \text{if $\sigma=\hat\alpha$},\\
      x^{|\alpha|} & \text{if $\sigma=\eps$ and $\tau=\hat\alpha$}, \\
      \Omega(\sigma, \alpha) & \text{if $\sigma\notin\{\eps,\hat\alpha\}$ and $\tau=\hat\alpha$}.\\
    \end{cases}
  \]
  Here, $w$---in the first clause---is the weight function of the
  digraph $D_T$.  Let $A$ denote the adjacency matrix of
  $D_T^{\alpha}$. In particular, if $T=\{\alpha, \beta\}$ then
  $V=\{\eps,\alpha,\beta,\hat\alpha\}$ and the adjacency matrix is
  \[A =
    \begin{bmatrix}
      \;0 & ux^{|\alpha|} & v x^{|\beta|} & x^{|\alpha|}\; \\
      \;0 & u\Omega(\alpha,\alpha) & v\Omega(\alpha,\beta) & \Omega(\alpha,\alpha) \; \\
      \;0 & u\Omega(\beta,\alpha) & v\Omega(\beta,\beta) & \Omega(\beta,\alpha) \; \\
      \;0 & 0 & 0 & 0 \\
    \end{bmatrix}
  \]
  Now, $C^{\alpha}(\uu;x)$ is the generating function for walks in
  $D_T^{\alpha}$ that start at $\eps$ and end in $\hat\alpha$. By the
  transfer-matrix method and Cramer's rule,
  \[
    C^{\alpha}(\uu;x) =
    (-1)^{|T|+1}\det\bigl(1-A: |T|+2, 1 \bigr)/\det(1-A).
  \]
\end{example}

\section{Pattern-rewriting systems}\label{PRS}

The Robinson-Schensted-Knuth (RSK) algorithm associates a permutation
$\pi\in\Sym_n$ with a pair $\bigl(P(\pi),Q(\pi)\bigr)$ of standard Young
tableaux having the same shape.  For $\pi,\sigma\in\Sym_n$, let us write
$\pi\equiv \sigma$ if $P(\pi)=P(\sigma)$. In particular,
\begin{equation*}
  132\equiv 312\quad\text{and}\quad 213\equiv 231.
\end{equation*}
Knuth~\cite{Knuth1970} showed that, when appropriately extended, these
two elementary transformations suffice to determine whether
$\pi\equiv \sigma$. Indeed, $\pi\equiv \sigma$ if and only if one can be
obtained from the other through a sequence of transformations of the
form
\[
  \ldots acb \ldots \,\equiv\, \ldots cab \ldots
  \quad\text{and}\quad
  \ldots bac \ldots \,\equiv\, \ldots bca \ldots
\]
where $a<b<c$. For instance, ${\it 423\/}516\equiv {\it 243\/}516$,
$24{\it 351\/}6 \equiv 24{\it 315\/}6$ and, by transitivity,
$423516\equiv 243156$. A relation that is similar to Knuth's called the
\emph{forgotten equivalence} has also been studied~\cite{Novelli2007}.

Linton, Propp, Roby and West~\cite{Linton2012} initiated the systematic
study of equivalence relations induced by pattern replacement. They
considered three types patterns: (i) unrestricted (classical) patterns;
(ii) consecutive patterns (adjacent positions); and (iii) patterns in
which both positions and values are required to be adjacent. Knuth's
equivalence as well as the forgotten equivalence belong to the second
type. Patterns of the third type are Hertzsprung patterns. A number of
authors have continued the work of Linton~\etal{} For instance, Pierrot,
Rossin and West~\cite{West2011} and Kuszmaul~\cite{Kuszmaul2013} have
studied equivalences of type (ii). Kuszmaul and Zhou~\cite{Kuszmaul2020}
concentrated on equivalences of type (iii). It is case (iii) that we
shall also focus on.

We will approach the equivalence
problem by considering the identities as uni-directional rewrite
rules. We do not assume familiarity with the literature on rewriting
systems and will give a brief introduction to the topic. Excellent
sources for further reading are the books \emph{Term rewriting and all
that} by Baader and Nipkow~\cite{Baader1998} and \emph{String-rewriting
systems} by Book and Otto~\cite{SRS1993}.

An \emph{abstract rewriting system} is simply a set together with a
binary relation. We are interested in a special kind of rewriting
systems that we call pattern-rewriting systems. To stress their
similarity with the well known string-rewriting systems we first
introduce those systems.

Let $\Sigma$ be a finite alphabet and let $\Sigma^{*}$ denote the set of
(finite) strings with letters from $\Sigma$. A \emph{string-rewriting
  system}---also known as a Semi-Thue system---$R$ on $\Sigma$ is a
subset of $\Sigma^{*}\times\Sigma^{*}$. Each pair $(u,v)$ of $R$ is a
\emph{rewrite rule} and is often written $u\to v$. The \emph{rewrite
  relation} $\to_R$ on $\Sigma^{*}$ induced by $R$ is defined as
follows. For any $w,w'\in \Sigma^{*}$ we have $w\to_R w'$ if and only if
there is $x\to y$ in $R$ such that $w = uxv$ and $w'=uyv$ for some
$u,v\in\Sigma^*$.

A \emph{pattern-rewriting system} is a subset
$R\subseteq\cup_n\mskip1mu\Sym_n\mskip-1mu\times\Sym_n$.  The
\emph{rewrite relation} on $\Sym$ induced by $R$ is defined as
follows. For any $\pi,\pi'\in\Sym$, we have $\pi\to_R\pi'$ if and only
if there is $\alpha\to\beta$ in $R$ such that we can write
$\pi=\sigma\hat\alpha\tau$ and $\pi'=\sigma\hat\beta\tau$, where
$\hat\alpha\simeq \alpha$ and $\hat\beta\simeq\beta$. As an example, if
$R=\{123\to 132\}$, then $1234\to_R1324$ and $1234\to_R1243$. The domain
of $R$ is denoted by
$\dom(R)=\bigl\{\alpha: (\alpha,\beta)\in R\bigr\}$.  The reflexive
transitive closure of $\to_R$ is denoted by $\tostar_R$.  The
equivalence closure (i.e.\ the reflexive symmetric transitive closure)
of $\to_R$ is denoted by $\equiv_R$. We omit the subscript $R$ from
$\to_R$, $\tostar_R$ and $\equiv_R$ when the context prevents ambiguity
from being introduced. We use $\from$ to denote the inverse of $\to$
(the relation $\{(\beta, \alpha):\alpha\to\beta\}$). Similarity,
$\fromstar$ denotes the inverse of $\tostar$.

We shall now introduce some definitions and basic results that apply to
abstract rewriting system in general and pattern-rewriting systems in
particular. An element $x$ is said to be \emph{in normal form} if there
is no $y$ such that $x\to y$. We say that $y$ is \emph{a normal form of}
$x$ if $x\tostar y$ and $y$ is in normal form. The relation $\to$ is
said to be \emph{terminating} if there are no
infinite chains $x_0\to x_1\to x_2\to\cdots$. If $\to$ is terminating
then every element has a normal form.  A relation $\to$ is
\emph{confluent} if, for all $x$,
\[
  y_1 \fromstar x\tostar y_2 \implies \exists z.\ y_1\tostar z\fromstar y_2.
\]
The relation $\to_R$ with $R=\{123\to 132\}$ from the previous example
is not confluent. Indeed, $1324 \from 1234\to 1243$ in which both $1324$
and $1243$ are in normal form. The relation induced by $\{132\to 123\}$
is, however, confluent (see Example~\ref{EQ2} below).

It can be shown that being confluent is equivalent to satisfying
\[
  x\equiv y \iff \exists z.\ x \tostar z\fromstar y.
\]
This is known as the \emph{Church-Rosser} property and it provides a
test for equivalence. If we assume that $\to$ is confluent (or,
equivalently, satisfies the Church-Rosser property) and that $\to$ is
terminating, then every element $x$ has a unique normal form which we
will write $\nf{x}$. Under these assumptions we arrive at a practical
test for equivalence.

\begin{lemma}[\cite{Baader1998}]\label{eq-nf}
  If $\to$ is terminating and confluent, then\,
  $x\equiv y \,\Leftrightarrow\, \nf{x}=\nf{y}$.
\end{lemma}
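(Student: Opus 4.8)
The plan is to prove both directions of the biconditional, using the two hypotheses (termination and confluence) where each is needed. This is the standard route for establishing the Church–Rosser property and its normal-form corollary, so I will organize the argument around that.

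For the forward direction, suppose $x\equiv y$. Since $\to$ is confluent, it enjoys the Church–Rosser property stated just above the lemma: $x\equiv y$ implies there exists $z$ with $x\tostar z\fromstar y$. Because $\to$ is terminating, every element has a normal form, and it can be shown that a terminating, confluent relation gives \emph{unique} normal forms (confluence rules out two distinct normal forms of the same element: if $n_1\fromstar x\tostar n_2$ with both $n_i$ in normal form, confluence yields a common $z$ with $n_1\tostar z\fromstar n_2$, but normal forms cannot be rewritten, so $n_1=z=n_2$). Now $\nf x$ is reached from $x$, and $z$ is reached from $x$, so by confluence $\nf x$ and $z$ have a common reduct; since $\nf x$ is irreducible this common reduct is $\nf x$ itself, giving $z\tostar \nf x$. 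Symmetrically $z\tostar \nf y$. Applying confluence to $\nf x\fromstar z\tostar \nf y$ produces a common reduct which, both normal forms being irreducible, must equal each, whence $\nf x=\nf y$.

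For the reverse direction, suppose $\nf x=\nf y$. By definition of normal form we have $x\tostar \nf x$ and $y\tostar \nf y=\nf x$, so $x\tostar \nf x\fromstar y$. Since $\tostar$ is contained in $\equiv$ and $\equiv$ is symmetric and transitive, we get $x\equiv \nf x\equiv y$, hence $x\equiv y$. This direction needs only termination (to guarantee the normal forms exist and are what the notation $\nf{\cdot}$ refers to); confluence is not required here.

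The main obstacle, and the only place where real work hides, is the forward direction: it rests on invoking the Church–Rosser property (already asserted in the text as equivalent to confluence) and on the uniqueness of normal forms under termination plus confluence. I would expect to either cite the Church–Rosser equivalence as given or, if a self-contained argument is wanted, derive the needed instance directly from confluence by the common-reduct chase sketched above. Either way, the reverse direction is essentially immediate, so the entire content of the lemma lies in turning $x\equiv y$ into a peak $x\tostar z\fromstar y$ and then funnelling both sides down to the shared irreducible element.
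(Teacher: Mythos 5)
Your proof is correct and follows essentially the same route as the paper, which states this lemma as a citation to Baader and Nipkow and, in the surrounding text, invokes exactly the ingredients you use: the equivalence of confluence with the Church--Rosser property, and the existence and uniqueness of normal forms under termination plus confluence. Your inline derivation of uniqueness and the common-reduct chase simply fill in the details the paper delegates to the reference.
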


\begin{corollary}\label{eq-classes}
  If $R$ is a terminating and confluent pattern-rewriting system, then
  the set of $\dom(R)$-avoiding permutations in $\Sym_n$ is a complete
  set of representatives for $\Sym_n/{\equiv}$, the set of equivalence
  classes of $\mskip1mu{\equiv}$.
\end{corollary}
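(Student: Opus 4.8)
The plan is to reduce the statement to Lemma~\ref{eq-nf} by showing that the permutations in normal form with respect to $\to_R$ are precisely the $\dom(R)$-avoiding permutations. First I would unwind the definition of $\to_R$: a step $\pi\to_R\pi'$ requires a rule $\alpha\to\beta$ together with a factorisation $\pi=\sigma\hat\alpha\tau$ with $\hat\alpha\simeq\alpha$, that is, an occurrence of $\alpha$ as a Hertzsprung factor of $\pi$. Conversely, given such an occurrence, the factor $\hat\beta$ with $\hat\beta\simeq\beta$ is forced to sit on the same positions and to use the same (consecutive) block of values as $\hat\alpha$, so it exists and is uniquely determined; in particular $|\beta|=|\alpha|$. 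Hence a rewrite is available at $\pi$ exactly when $\pi$ contains some $\alpha\in\dom(R)$, and $\pi$ is in normal form if and only if $\pi$ is $\dom(R)$-avoiding.

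The same description shows that each rewrite replaces a factor by one on the identical positions and values, so $\to_R$, and therefore $\equiv$, preserves the length of a permutation. Consequently $\Sym_n$ is closed under $\equiv$, every class of $\Sym_n/{\equiv}$ lies in $\Sym_n$, and the normal form $\nf{\pi}$ of any $\pi\in\Sym_n$ again belongs to $\Sym_n$; it exists because $R$ is terminating and is unique because $R$ is confluent.

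With these observations the corollary is immediate from Lemma~\ref{eq-nf}. For completeness, any $\pi\in\Sym_n$ satisfies $\pi\tostar\nf{\pi}$, hence $\pi\equiv\nf{\pi}$, and $\nf{\pi}$ is $\dom(R)$-avoiding by the first paragraph, so every class has a $\dom(R)$-avoiding representative. For distinctness, two $\dom(R)$-avoiding permutations are their own normal forms, so Lemma~\ref{eq-nf} gives $\pi\equiv\pi'$ iff $\nf{\pi}=\nf{\pi'}$ iff $\pi=\pi'$; thus distinct avoiders are inequivalent. The only real content is the identification of normal forms with $\dom(R)$-avoiders in the first paragraph; everything else is bookkeeping, so that is where I would be most careful, in particular in checking that the right-hand factor $\hat\beta$ is always available, so that no $\dom(R)$-containing permutation can accidentally fail to admit a rewrite.
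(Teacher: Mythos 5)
Your proposal is correct and follows exactly the route the paper intends: the corollary is stated as an immediate consequence of Lemma~\ref{eq-nf}, with the (unstated) key observation being precisely yours, namely that the permutations in normal form under $\to_R$ are exactly the $\dom(R)$-avoiding ones. Your care in checking that $\hat\beta$ always exists on the same positions and value block (so that containing a pattern in $\dom(R)$ really does guarantee an available rewrite) fills in the one detail the paper leaves implicit, and the rest (length preservation, existence and uniqueness of normal forms from termination and confluence) matches the paper's framework.
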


For a particular relation, our job is thus reduced to proving that it
terminates and that it is confluent. Let us start by discussing
strategies for proving confluence. A relation $\to$ is \emph{locally
  confluent} if
\[
  y_1 \from x\to y_2 \implies
  \exists z.\ y_1 \tostar z\fromstar y_2.
\]
Newman's lemma~\cite{Baader1998, Newman1942} establishes that any
locally confluent and terminating relation is confluent. This is an
improvement, but to test a pattern-rewriting system for local confluence
we must a priori still consider all permutations $\pi\in\Sym$ and all
pairs $\pi\to \sigma$ and $\pi\to\tau$. Note, however, that if
$\alpha_1\to\beta_1$ and $\alpha_2\to\beta_2$ are rules in $R$ and
$\pi=\pi_1\hat\alpha_1\pi_2\hat\alpha_2\pi_3$ where
$\hat\alpha_1\simeq\alpha_1$ and $\hat\alpha_2\simeq\alpha_2$, then
\[
  \begin{tikzpicture}[node distance=5em,baseline=0]
    \node (a) {$\pi_1\hat\alpha_1\pi_2\hat\alpha_2\pi_3$};
    \node (b) [below left of=a] {$\pi_1\hat\beta_1\pi_2\hat\alpha_2\pi_3$};
    \node (c) [below right of=a] {$\pi_1\hat\alpha_1\pi_2\hat\beta_2\pi_3$};
    \node (d) [below of=a,node distance=7em] {$\pi_1\hat\beta_1\pi_2\hat\beta_2\pi_3$};
    \draw[->] (a) -- (b);
    \draw[->] (a) -- (c);
    \draw[->] (b) -- (d);
    \draw[->] (c) -- (d);
  \end{tikzpicture}
\]
in which $\hat\beta_1\simeq\beta_1$ and $\hat\beta_2\simeq\beta_2$.
Thus, to prove local confluence, we can restrict attention to
permutations of the form $\pi=\pi_1\gamma\pi_2$ and pairs of, not
necessarily distinct, rules $\alpha_1\to\beta_1$ and
$\alpha_2\to\beta_2$ in $R$ such that $\alpha_1$ is a proper Hertzsprung
prefix of $\gamma$, $\alpha_2$ is a proper Hertzsprung suffix of
$\gamma$ and $|\gamma| < |\alpha_1|+|\alpha_2|$. The situation is
summarized in the following lemma.

\begin{lemma}\label{prs-confluence}
  Let $R$ be a terminating pattern-rewriting system. Let
  \[
    \olap(R) = \bigcup_{(\alpha_1,\alpha_2)}\!\olap(\alpha_1,\alpha_2),
  \]
  where the union is over all
  pairs $(\alpha_1,\alpha_2)\in\dom(R)\times \dom(R)$. If, for all
  $\pi\in\olap(R)$,
  \[
    \rho_1 \from \pi \to \rho_2 \implies
    \exists\sigma.\ \rho_1 \tostar \sigma \fromstar \rho_2,
  \]
  then $\to$ is confluent.
\end{lemma}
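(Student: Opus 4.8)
The plan is to obtain global confluence from local confluence via Newman's lemma, and then to reduce the verification of local confluence to the ``critical'' peaks parametrized by $\olap(R)$. Since $R$ is assumed terminating, Newman's lemma (invoked just before the statement) tells us it suffices to show that $\to$ is \emph{locally} confluent, i.e.\ that every peak $\rho_1 \from \pi \to \rho_2$ can be joined. So I would fix such a peak, coming from rules $\alpha_1\to\beta_1$ and $\alpha_2\to\beta_2$ applied at occurrences $\hat\alpha_1\simeq\alpha_1$ and $\hat\alpha_2\simeq\alpha_2$ in $\pi$, and split into cases according to the relative positions of these two occurrences.

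The first case is when $\hat\alpha_1$ and $\hat\alpha_2$ occupy disjoint positions. In a permutation, disjoint positions carry disjoint values, so the two factors have disjoint value-blocks and rewriting one leaves the other untouched; the rewrites therefore commute and rewriting the remaining occurrence in each of $\rho_1$ and $\rho_2$ produces a common $\sigma$ with $\rho_1\to\sigma\from\rho_2$. This is exactly the diamond displayed before the statement, and it supplies the join immediately.

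The interesting case is when the two occurrences share a position. Here I would pass to the minimal factor $\gamma$ of $\pi$ spanning both occurrences and make the key structural observation: each occurrence, being a Hertzsprung occurrence, carries a block of consecutive values, and two overlapping blocks of consecutive integers have a union that is again a block of consecutive integers. Hence $\gamma$ itself carries consecutive values, so $\gamma\simeq\mu$ with $\mu=\st(\gamma)$; ordering the occurrences so that one is a proper Hertzsprung prefix and the other a proper Hertzsprung suffix of $\gamma$, the overlap condition $|\gamma|<|\alpha_1|+|\alpha_2|$ gives $\mu\in\olap(\alpha_1,\alpha_2)\subseteq\olap(R)$. Writing $\pi=\pi_1\gamma\pi_2$, both rewrites take place inside $\gamma$, so after standardizing they become a peak $\st(\gamma^{(1)})\from\mu\to\st(\gamma^{(2)})$ at the element $\mu\in\olap(R)$, and the hypothesis supplies $\sigma'$ with $\st(\gamma^{(1)})\tostar\sigma'\fromstar\st(\gamma^{(2)})$.

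It remains to transport this join from $\mu$ back to $\pi$, and this is the step I expect to need the most care. I would isolate it as an auxiliary lemma: because every rule preserves length and replaces a value-block by the \emph{same} value-block, the relation $\to$ is closed under shifting a factor by a constant and under embedding it in a fixed context, so $\nu\tostar\nu'$ forces $\pi_1\hat\nu\pi_2\tostar\pi_1\hat\nu'\pi_2$ for the corresponding Hertzsprung occurrences $\hat\nu\simeq\nu$, $\hat\nu'\simeq\nu'$ on $\gamma$'s value-block. Applying this to $\st(\gamma^{(1)})\tostar\sigma'$ and $\st(\gamma^{(2)})\tostar\sigma'$ lifts both derivations into the context $\pi_1(\cdot)\pi_2$ and yields $\rho_1\tostar\pi_1\hat\sigma'\pi_2\fromstar\rho_2$, where $\hat\sigma'\simeq\sigma'$ is the unique Hertzsprung occurrence of $\sigma'$ on that value-block; since this occurrence is forced, the two lifted derivations genuinely meet. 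This closes the diagram, establishing local confluence and hence, by Newman's lemma, confluence. The hard part is precisely verifying that standardization and re-embedding commute with $\to$: one must check that applying a rule inside the shifted, contextualized copy of $\gamma$ is legitimate at every step, which rests entirely on the length- and value-block-preservation of the rules.
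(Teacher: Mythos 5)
Your proposal is correct and follows essentially the same route as the paper: Newman's lemma reduces confluence to local confluence, redexes at disjoint positions commute via the diamond displayed just before the lemma, and genuinely overlapping redexes standardize to a peak at an element of $\olap(R)$, whose join is then lifted back through the context. The only difference is that you make explicit the transport step (invariance of $\to$ under shifting a value block and embedding in a context), which the paper treats as immediate from the fact that rules preserve length and replace a value block by the same value block.
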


As a corollary to Lemma~\ref{prs-confluence}, confluence of a finite and
terminating pattern-rewriting system is decidable.

A relation $\to$ is called \emph{globally finite} if for each element
$x$ there are finitely many elements $y$ such that $x\toplus y$, where
$\toplus$ denotes the transitive closure of $\to$. A relation $\to$ is
called \emph{acyclic} if there is no element $x$ such that $x\toplus
x$. It is well known that any acyclic and globally finite relation is
terminating. Let $R$ be a pattern-rewriting system. Since each rule in
$R$ preserves the length of a permutation, it is clear that $\to_R$ is
globally finite. Thus to prove that $\to_R$ terminates it suffices to
prove that that it is acyclic. Let us call $f:\Sym\to\NN$ an
\emph{increasing statistic} with respect to $R$ if $\pi\to_R\sigma$
implies $f(\pi) < f(\sigma)$. Clearly, if there exists such a function
$f$ then $\to_R$ is acyclic.

\begin{lemma}
  Let $R$ be a pattern-rewriting system. If there exists an increasing
  statistic with respect to $R$, then $\to_R$ is terminating.
\end{lemma}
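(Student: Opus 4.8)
The plan is to assemble the three facts already laid out just before the statement: that $\to_R$ is globally finite, that an increasing statistic forces $\to_R$ to be acyclic, and that acyclicity together with global finiteness implies termination. The first task is to make the acyclicity precise. Suppose for contradiction that $\pi\toplus\pi$ for some $\pi\in\Sym$; by definition of the transitive closure there is a chain $\pi=\pi_0\to_R\pi_1\to_R\cdots\to_R\pi_n=\pi$ with $n\geq 1$. Applying the increasing statistic $f$ at each step gives $f(\pi_0)<f(\pi_1)<\cdots<f(\pi_n)$, hence $f(\pi)<f(\pi)$, a contradiction. So $\to_R$ is acyclic.

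Next I would record global finiteness, which holds regardless of $f$: since every rule of $R$ lies in some $\Sym_k\times\Sym_k$, each rewrite step preserves length, so $\pi\toplus\sigma$ forces $|\pi|=|\sigma|$, and there are only $|\pi|!$ permutations of that length. Thus $\{\sigma:\pi\toplus\sigma\}$ is finite for every $\pi$. To conclude, I would combine the two: an infinite chain $\pi_0\to_R\pi_1\to_R\pi_2\to_R\cdots$ would have all of its terms lying in the finite set $\{\pi_0\}\cup\{\sigma:\pi_0\toplus\sigma\}$, so by pigeonhole $\pi_i=\pi_j$ for some $i<j$, giving the cycle $\pi_i\toplus\pi_i$ and contradicting acyclicity. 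Hence no infinite chain exists and $\to_R$ terminates; this last combination is exactly the ``well known'' implication cited in the text.

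I do not anticipate a genuine obstacle, since the surrounding discussion has already done the conceptual work and the argument is a routine assembly. The only step requiring any care is the acyclicity verification, where one must chain the strict inequalities $f(\pi_i)<f(\pi_{i+1})$ along the entire closure path rather than invoking the single-step defining property of $f$ just once.
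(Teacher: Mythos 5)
Your proof is correct and takes essentially the same route as the paper, which proves this lemma in the paragraph immediately preceding its statement: length-preservation of the rules gives global finiteness, the increasing statistic gives acyclicity, and the combination of the two yields termination. Your only addition is to spell out, via the pigeonhole argument, the implication the paper cites as ``well known,'' which is a harmless and correct elaboration.
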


Having established the general framework we will now turn to
applications. A summary of the cases we shall consider can be found in
Table~\ref{EQ-Table}; EQ2 through EQ6 were introduced by
Linton~\etal~\cite{Linton2012} and in the case of Hertzsprung patterns
they left the enumeration of equivalence
classes as open problems. Kuszmaul and Zhou~\cite{Kuszmaul2020} have
characterized the equivalence classes induced by the three separate
relations $123\equiv 132$, $123\equiv 321$ and
$123\equiv 132\equiv 321$. These are referred to as EQ2, EQ3 and EQ4 in
Table~\ref{EQ-Table}. Kuszmaul and Zhou provided explicit formulas for
the number of equivalence classes modulo EQ2 and EQ3 and noted that the
formula is the same in both cases. Indeed, it is
formula~\eqref{Myers132}, which is a special case of Myers's
formula~\eqref{Myers-formula}. For each of EQ1 through EQ7 we compute
generating functions for the number of equivalence classes in
$\Sym_n$. In particular we solve the two remaining open problems, EQ5
and EQ6, posed by Linton~\etal{} The generating function for EQ7
is also new. Our approach is quite general and could be applied other
pattern-replacement equivalences of the Hertzsprung type. The reason for
choosing EQ1--EQ6 is that those equivalences have a history, and that we need
to limit the scope of our investigation. We choose EQ7 for two reasons.
It is a natural generalization of Stanley's equivalence (EQ1) and it
illustrates that large systems are within reach of our methods.

For reference, numerical data on the number of equivalence classes
modulo each of EQ2 through EQ7 is given in Table~\ref{SEQ-Table} of the
appendix.

\begin{table}
\begin{alignat*}{5}
  &\qquad\;\;&
  \text{Equivalences}&&&
  \text{Rules}&&
  \text{OEIS}&\qquad&
  \text{Reference}\\[1.5ex]
  &\text{EQ1} & 12\equiv 21 &\qquad&&
  \begin{aligned}[t]
    & 21 \to 12 \\
    & 231 \to 312
  \end{aligned}
  &&\text{A013999} &&\text{Stanley~\cite{Stanley2012}} \\[1.5ex]
  &\text{EQ2} & 123 \equiv 132 &&& 132\to 123 && \text{A212580} &&
  \begin{aligned}[t]
    &\text{Kuszmaul \&}\\[-0.6ex]
    &\text{Zhou~\cite{Kuszmaul2020}}
  \end{aligned}\\[1ex]
  &\text{EQ3} & 123 \equiv 321 &&&
  \begin{aligned}[t]
    & 321 \to 123 \\
    & 2341 \to 4123
  \end{aligned}
  &&\text{A212580} &&
  \begin{aligned}[t]
    &\text{Kuszmaul \&}\\[-0.6ex]
    &\text{Zhou~\cite{Kuszmaul2020}}
  \end{aligned}\\[1.5ex]
  &\text{EQ4} &
  \begin{aligned}[t]
    123 &\equiv 132 \\
        &\equiv 213
  \end{aligned}
  &&&
  \begin{aligned}[t]
    & 132 \to 123\\
    & 213 \to 123
  \end{aligned}
  && \text{A212581} &&
  \begin{aligned}[t]
    &\text{Kuszmaul \&}\\[-0.6ex]
    &\text{Zhou~\cite{Kuszmaul2020}}
  \end{aligned}\\[1.5ex]
  &\text{EQ5} &
  \begin{aligned}[t]
    123 & \equiv 132 \\
        & \equiv 321
  \end{aligned}&&&
  \begin{aligned}[t]
    & 132 \to 123 \\
    & 321 \to 123 \\
    & 2341 \to 4123
  \end{aligned}
  && \text{A212432} &&\text{Example~\ref{EQ5}} \\[1.5ex]
  &\text{EQ6} &
  \begin{aligned}[t]
    123 &\equiv 132 \\
        &\equiv 213 \\
        &\equiv 321
  \end{aligned}
  &&&
  \begin{aligned}[t]
     & 132 \to 123 \\
     & 213 \to 123 \\
     & 321 \to 123 \\
     & 2341 \to 4123
  \end{aligned}
  &\qquad& \text{A212433} &&\text{Example~\ref{EQ6}}\\[1.5ex]
  &\text{EQ7} &
  \begin{aligned}[t]
    123 &\equiv 132 \\
        &\equiv 213 \\
        &\equiv 231 \\
        &\equiv 312 \\
        &\equiv 321
  \end{aligned}
  &&&
  \makebox[4em][l]{%
  $\begin{aligned}[t]
     & 132 \to 123 \\
     & 213 \to 123 \\
     & 231 \to 123 \\
     & 312 \to 123 \\
     & 321 \to 123 \\
     & 2341 \to 4123 \\
     & 34512 \to 45123 \\
     & 54123 \to 45123 \\
     & 6745123 \to 7456123
  \end{aligned}$}
  &\qquad& &&\text{Example~\ref{EQ7}}
\end{alignat*}
\caption{Equivalences and corresponding pattern-rewriting systems}\label{EQ-Table}
\end{table}

\begin{example}[EQ1]\label{EQ1}
  Stanley~\cite{Stanley2012} considered the equivalence relation on
  $\Sym_n$ generated by the interchange of any two adjacent elements
  $\pi(i)$ and $\pi(i+1)$ such that $|\pi(i)-\pi(i+1)|=1$. In our
  terminology it is the equivalence $12\equiv 21$.  Let us express
  Stanley's equivalence relation using a rewrite system. Consider
  $\{21 \to 12\}$. Note that $231\from 321\to 312$ in which both $231$
  and $312$ are in normal form. Hence this system is not confluent.  We
  can however make it into a confluent system by adding a rule.  Let
  $R=\{21 \to 12, 231\to 312\}$.
  To see that the rewrite rule induced by $R$ terminates consider the
  following permutation statistic. Let $\Sigma_{12}(\pi)$ be the sum
  of positions of occurrences of the Hertzsprung pattern $12$. In
  other words, $\Sigma_{12}(\pi)$ is the sum of all $i\in[n-1]$ such
  that $\pi(i+1)=\pi(i)+1$. Clearly, $\Sigma_{12}$ is increasing with
  respect to $R$ and hence $\to_R$ terminates.
  To show confluence we will use
  Lemma~\ref{prs-confluence}. We have $\olap(21,21)=\{321\}$,
  $\olap(21,231)=\olap(231,231)=\emptyset$, $\olap(231,21)=\{3421\}$ and
  hence $\olap(R) = \{321, 3421\}$. Confluence thus follows from the
  two diagrams below.
  \begin{align*}
    \begin{tikzpicture}[baseline=0]
      \node (3421) {$3421$};
      \node (4231) [below left of=3421, node distance=3.3em, xshift=-0.7em] {$4231$};
      \node (3412) [below of=3421, node distance=7.1em] {$3412$};
      \node (4312) [below of=4231, node distance=2.5em] {$4312$};
      \draw[->] (3421) -- (4231);
      \draw[->] (3421) -- (3412);
      \draw[->] (4231) -- (4312);
      \draw[->] (4312) -- (3412);
    \end{tikzpicture}\qquad\quad
    \begin{tikzpicture}[baseline=2em]
      \node at (0,0) (321) {$321$};
      \node at (0,-1.3) (231) {$231$};
      \node at (1.8,-1.3) (312) {$312$};
      \draw[->] (321) -- (231);
      \draw[->] (321) -- (312);
      \draw[->] (231) -- (312);
    \end{tikzpicture}
  \end{align*}
  By Corollary~\ref{eq-classes}, each equivalence class
  under $12\equiv 21$ contains exactly one permutation avoiding
  $\{21,231\}$. Hence the generating function for the number of
  equivalence classes can be computed using
  Theorem~\ref{cluster-thm} together with Theorem~\ref{transfer-thm}.

  Stanley defines a permutation $\pi=a_1a_2\cdots a_n\in\Sym_n$ as
  \emph{salient} if we never have $a_i=a_{i+1}+1$ ($1\leq i\leq n-1$) or
  $a_i=a_{i+1}+2= a_{i+2}+1$ ($1\leq i\leq n-2$). In terms of
  Hertzsprung patterns, $\pi$ is salient if and only if it is
  $\{21,312\}$-avoiding. Stanley shows that each equivalence class under
  $12\equiv 21$ contains exactly one salient permutation.  On account of
  the preceding paragraph, this implies that the two sets $\{21,231\}$
  and $\{21,312\}$ are Wilf-equivalent (their sets of avoiders are
  equipotent).

  Stanley counts the salient permutations by a direct
  inclusion-exclusion argument. The resulting generating function is
  \[
    \fsum{x(1-x)}.
  \]
  We, of course, obtain the same answer.  In fact,
  theorems~\ref{cluster-thm} and~\ref{transfer-thm} show that the joint
  distribution of $21$ and $231$ is the same as the joint distribution
  of $21$ and $312$. In both cases the cluster generating function is
  $x^2(vx + u)/(1-ux)$, where $u=u_{21}$ and $v=u_{231}$ or $v=u_{312}$.
\end{example}

\begin{example}[EQ2]\label{EQ2}
  Consider the pattern-rewriting system
  $R = \bigl\{ 132\to 123 \bigr\}$. As usual, let $\to$ also denote the
  rewrite relation induced by $R$. The equivalence closure of $\to$ is
  equal to the equivalence relation induced by $123\equiv 132$.  Note
  that $\to$ is terminating. Indeed, the number of occurrences of $123$
  is an increasing statistic ($\Sigma_{12}$ is also an
  increasing statistic).  Moreover, since $\olap(132,132)$ is empty,
  $\to$ is trivially confluent.  By Corollary~\ref{eq-classes}, the
  number of equivalence classes is $|\Sym_n(132)|$ for which we derived
  a formula in Example~\ref{Ex132}.
\end{example}

\begin{example}[EQ3]\label{EQ3}
  The relation induced by $123\equiv 321$ is the equivalence closure
  of the rewrite relation $\to$ induced by
  $R=\{321\to 123, 2341\to 4123\}$.  This relation is terminating
  since $\Sigma_{123}$, the sum of positions of occurrences of $123$,
  is an increasing statistic.
  We shall show that it is confluent as well. The relation induced by
  $\{321\to 123\}$ is not confluent, that is why we include the rule
  $2341\to 4123$.  We have
  $\olap(321,2341) = \olap(2341,2341) = \emptyset$,
  \begin{equation*}
    \olap(321,321) = \{4321, 54321\}\quad\text{and}\quad
    \olap(2341,321) = \{456321\}.
  \end{equation*}
  Confluence thus follows from Lemma~\ref{prs-confluence} and the
  three diagrams below.
  \begin{align*}
    &\begin{tikzpicture}[baseline=-5.78em]
      \node (456321) {$456321$};
      \node[below left of=456321, node distance=3.9em, xshift=-0.6em] (634521) {$634521$};
      \node[below of=456321, node distance=10.7em] (456123) {$456123$};
      \node[below of=634521, node distance=2.65em] (652341) {$652341$};
      \node[below of=652341, node distance=2.65em] (654123) {$654123$};
      \draw[->] (456321) -- (456123);
      \draw[->] (456321) -- (634521);
      \draw[->] (634521) -- (652341);
      \draw[->] (652341) -- (654123);
      \draw[->] (654123) -- (456123);
    \end{tikzpicture}\qquad\quad
    \begin{aligned}
    &\begin{tikzpicture}[scale=0.75]
      \node at (0,2) (4321) {$4321$};
      \node at (0,0) (2341) {$2341$};
      \node at (3,0) (4123) {$4123$};
      \draw[->] (4321) -- (2341);
      \draw[->] (4321) -- (4123);
      \draw[->] (2341) -- (4123);
    \end{tikzpicture}\\[0.8em]
    &\begin{tikzpicture}[scale=0.75]
      \node at (0,2) (54321){$54321$};
      \node at (0,0) (34521){$34521$};
      \node at (3,0) (52341){$52341$};
      \node at (6,0) (54123){$54123$};
      \draw[->] (54321) -- (34521);
      \draw[->] (54321) -- (52341);
      \draw[->] (54321) -- (54123);
      \draw[->] (34521) -- (52341);
      \draw[->] (52341) -- (54123);
    \end{tikzpicture}
    \end{aligned}
  \end{align*}
  Linton \etal~\cite{Linton2012} noted that the number of equivalence
  classes under $123\equiv 132$ and $123\equiv 321$ appears to be the
  same. This was later proved by Kuszmaul and Zhou~\cite{Kuszmaul2020}.
  The $\{321,2341\}$-cluster generating function is
  \[
    \frac
    {u_{321}u_{2341}x^5 - u_{2341}x^4 - u_{321}x^3}
    {u_{321}x^2 + u_{321}x - 1}.
  \]
  Recall that the generating function for $132$-clusters is
  $u_{132}x^3$. While these generating functions are quite different, on
  specializing $u_{2341}$, $u_{321}$ and $u_{132}$ to $-1$ we get $-x^3$
  in both instances, resulting in an alternative proof of the observation
  made by Linton \etal{} In particular, the two sets of patterns
  $\{132\}$ and $\{321, 2341\}$ are Wilf-equivalent. Finding a
  combinatorial proof of this fact remains an open problem.
\end{example}

\begin{example}[EQ4]\label{EQ4}
  Let $R=\{132\to 123,213\to 123\}$. Its equivalence closure coincides
  with the equivalence relation induced by $123\equiv 132\equiv 213$.
  That $\to$ terminates follows from $\Sigma_{123}$ being an increasing
  statistic. Note that
  \[
    \olap(R) = \olap(132,213)\cup\olap(213,132) = \{1324, 21354\}.
  \]
  Using either $132\to 123$ or $213\to 123$ we have $1324\to 1234$ in
  which $1234$ is in normal form. The diagram for $21354$ is rather
  simple too:
  \[
    \begin{tikzpicture}[node distance=4em,baseline=0]
      \node (a) {$21354$};
      \node (b) [below left of=3421] {$12354$};
      \node (c) [below right of=3421]{$21345$};
      \node (d) [below of=3421,node distance=5.6em] {$12345$};
      \draw[->] (a) -- (b);
      \draw[->] (a) -- (c);
      \draw[->] (b) -- (d);
      \draw[->] (c) -- (d);
    \end{tikzpicture}
  \]
  Thus, by Corollary~\ref{eq-classes}, the number of equivalence classes
  under $123\equiv 132\equiv 213$ is the same as the number of
  permutations of $[n]$ that avoid $132$ and $213$, and the
  corresponding generating function is
  \[
    \sum_{n\geq 0}|\Sym_n(132,213)|\mskip1mu x^n =
    \fsum{\frac{(1+x)^2(1-x)x}{x^2 + x + 1}}.
  \]
  The coefficient of $x^n$ is given by Kuszmaul and
  Zhou's formula~\cite[Theorem 2.32]{Kuszmaul2020}
  \[
    \sum_{i=0}^{\floor{n/3}}(-2)^i\binom{n-2i}{i}(n-2i)!
  \]
\end{example}

\begin{example}[EQ5]\label{EQ5}
  Consider the pattern-rewriting system
  \[
    R = \{132\to 123, 321\to 123, 2341\to 4123\}.
  \]
  The equivalence closure of $\to$ is induced by
  $123\equiv 132\equiv 321$. There is no overlap of the pattern $132$
  with $123$ or $2341$. Hence
  \[
    \olap(R) = \olap(321,321) \cup \olap(2341,321) = \{4321,54321,456321\}
  \]
  and the proofs of confluence and termination are the same as in
  Example~\ref{EQ3} (EQ3). Thus the number of equivalence classes under
  $\equiv$ equals the number of permutations avoiding the patterns
  in $\dom(R)=\{132,321,2341\}$.  To count such permutations we construct
  the corresponding digraph. Letting $u=u_{132}$, $v=u_{321}$ and
  $s=u_{2341}$ it is depicted below.
  \vspace{-1.5ex}
  \[
    \begin{tikzpicture}[scale=0.8]
      \node at (0,3) (132) {$132$};
      \node at (3,3) (321) {$321$};
      \node at (6,3) (2341) {$2341$};
      \node at (3,0) (eps) {$\eps$};
      \draw[->] (eps) -- (132) node[midway, left=3pt, yshift=-1pt] {$ux^3$};
      \draw[->] (eps) -- (321) node[midway, right=2pt, yshift=7pt] {$vx^3$};
      \draw[->] (eps) -- (2341) node[midway, right=3pt, yshift=-1pt] {$sx^4$};
      \path (321) edge[loop] node[above] {$sx+sx^2$} (321);
      \path[->] (2341) edge node[above=3pt, xshift=3pt] {$sx^2$} (321);
    \end{tikzpicture}
    \vspace{-1ex}
  \]
  The cluster generating function is
  \[
    C(u,v,s; x) =
    \frac
    {(uv + vs)x^5 + (uv -s)x^4 - (u + v)x^3}
    {vx^2 + vx - 1}
  \]
  and on applying Theorem~\ref{cluster-thm} we find that
  \[
    \sum_{n\geq 0}|\Sym_n(132,321,2341)|\mskip1mu x^n = \fsum{x(1-2x^2)}.
  \]
  % The corresponding OEIS sequence is A212432.
\end{example}

\begin{example}[EQ6]\label{EQ6}
  Consider the pattern-rewriting system
  \[
    R=\{132\to 123, 213\to 123, 321\to 123,2341\to 4123\}.
  \]
  Its equivalence closure coincides with the equivalence relation
  induced by $123\equiv 132\equiv 213\equiv 321$.  That $\to$ terminates
  follows from $\Sigma_{123}$ being an increasing statistic with respect
  to $R$.  Note that
  % =\olap(132,213)\cup\olap(213,132)\cup\olap(321,321)\cup\olap(2341,321)\\
  $\olap(R) = \{1324, 4321, 21354, 54321, 456321\}$. In
  Example~\ref{EQ3} (EQ3) we have seen diagrams for $4321$, $54321$ and
  $456321$. This leaves $1324$ and $21354$, but we have seen diagrams
  for those permutations in Example~\ref{EQ4} (EQ4). As before, we use
  Theorem~\ref{transfer-thm} to compute the cluster generating
  function. In the end we find that the generating function for the
  number of equivalence classes under
  $123\equiv 132\equiv 213\equiv 321$ is
  \[
    \sum_{n\geq 0}|\Sym_n(132,213,321,2341)|\mskip1mu x^n =
    \fsum{\frac{-x^5 - 2x^4 - 2x^3 + x^2 + x}{x^2 + x + 1}}.
  \]
\end{example}

\begin{example}[EQ7]\label{EQ7}
  As detailed in Example~\ref{EQ1} (EQ1), Stanley characterized and
  counted the equivalence classes induced by $12\equiv 21$. In other
  words, the equivalences induced by considering all permutations in
  $\Sym_2$ as equivalent. We shall consider the corresponding problem
  for $\Sym_3$. That is, the equivalence induced by
  \[123\equiv
    132\equiv
    213\equiv
    231\equiv
    312\equiv
    321.
  \]
  Let $R$ consist of the rules $\alpha\to 123$, for
  $\alpha\in\Sym_3\setminus\{123\}$, together with the following four rules
  that are needed to make the relation confluent:
  \begin{align*}
    2341 \to 4123,\;\,
    34512 \to 45123,\;\,
    54123 \to 45123,\;\,
    6745123 \to 7456123.
  \end{align*}
  One can check that $\Sigma_{12}$ is an increasing statistic and hence
  $\to$ terminates.  Confluence follows from verifying local confluence
  for each permutation in $\olap(R)$, which is the union of the
  sets
  \begin{gather*}
    \olap(123,213) = \{1324\},\mskip6mu
    \olap(213,132) = \{21354\},\mskip6mu
    \olap(231,312) = \{45312\}, \\
    \olap(231,321) = \{45321\},\mskip6mu
    \olap(231,54123) = \{6754123\},\mskip6mu
    \olap(312,231) = \{4231\}, \\
    \olap(312,6745123) = \{86745123\},\mskip6mu
    \olap(321,312) = \{54312\}, \\
    \olap(321,321) = \{4321, 54321\} ,\mskip6mu
    \olap(321,54123) = \{654123, 7654123\} \\
    \olap(2341,312) = \{456312\} ,\mskip6mu
    \olap(2341,321) = \{456321\}, \\
    \olap(2341,54123) = \{67854123\} ,\mskip6mu
    \olap(34512,231) = \{456231\}, \\
    \olap(34512,6745123) = \{8\,9\mskip2mu 1\mskip-1mu0\,6\,7\,4\,5\,1\,2\,3\} ,\mskip6mu
    \olap(54123,2341) = \{652341\}, \\
    \olap(54123,34512) = \{7634512\} ,\mskip6mu
    \olap(6745123,2341) = \{78562341\}, \\
    \olap(6745123,34512) = \{896734512\}.
  \end{gather*}
  We omit the details. Thus, the two cardinalities $|\Sym_n/{\equiv}|$
  and $|\Sym_n(\dom(R))|$ are equal. The adjacency matrix of $D_T$ (with $T=\dom(R)$) is
  \[
    \begin{bmatrix}
    \,0& u_0x^3& u_1x^3& u_2x^3& u_3x^3& u_4x^3& u_5x^4& u_6x^5& u_7x^5& u_8x^7\,\\
      0& 0& u_1x& 0& 0& 0& 0& 0& 0& 0 \\
      0& u_0x^2& 0& 0& 0& 0& 0& 0& 0& 0 \\
      0& 0& 0& 0& u_3x^2& u_4x^2& 0& 0& u_7x^4& 0 \\
      0& 0& 0& u_2x& 0& 0& 0& 0& 0& u_8x^5 \\
      0& 0& 0& 0& u_3x^2& u_4x^2+u_4x& 0& 0& u_7x^4+u_7x^3& 0 \\
      0& 0& 0& 0& u_3x^2& u_4x^2& 0& 0& u_7x^4& 0 \\
      0& 0& 0& u_2x& 0& 0& 0& 0& 0& u_8x^5 \\
      0& 0& 0& 0& 0& 0& u_5x& u_6x^2& 0& 0 \\
      0& 0& 0& 0& 0& 0& u_5x& u_6x^2& 0& 0
    \end{bmatrix}
  \]
  and the corresponding generating function is
  % \[
  %   \fsum{x-\frac{x^5 + 3x^4 + 5x^3}{x^2 + x + 1}}.
  % \]
  \[
    \fsum{\frac{-x^5 - 3x^4 - 4x^3 + x^2 + x}{x^2 + x + 1}}.
  \]
\end{example}

\section*{Questions, conjectures and remarks}

Consider two Hertzsprung patterns $\sigma,\tau\in\Sym_k$ as equivalent
if $|\Sym_n(\sigma)|=|\Sym_n(\tau)|$ for all $n\geq 0$. How many
equivalence classes are there? In other words, what is the number of
Wilf-classes? By Corollary~\ref{single-pattern} it is the same as the
number of distinct ``autocorrelation polynomials''
$\Omega(\sigma,\sigma)$ with $\sigma\in\Sym_k$. For $k=1,2,\dots,7$ we find
\begin{gather*}
 \{\,0 \,\} \\
 \{\,x \,\} \\
 \{\,0,\, x^2 + x \,\} \\
 \{\,0,\, x^2,\, x^3,\, x^3 + x^2 + x \,\} \\
 \{\,0,\, x^3,\, x^4,\, x^4 + x^3 + x^2 + x \,\} \\
 \{\,0,\, x^3,\, x^4,\, x^4 + x^2,\, x^5,\, x^5 + x^4,\, x^5 + x^4 + x^3 + x^2 + x \,\}\\
 \{\,
 0,\,
 x^4,\,
 x^5,\,
 x^6,\,
 x^6 + x^3,\,
 x^6 + x^5,\,
 x^6 + x^5 + x^4 + x^3 + x^2 + x
 \,\}
\end{gather*}
% 1 1 [[]]
% 2 1 [[1]]
% 3 2 [[], [2, 1]]
% 4 4 [[], [2], [3], [3, 2, 1]]
% 5 4 [[], [3], [4], [4, 3, 2, 1]]
% 6 7 [[], [3], [4], [4, 2], [5], [5, 4], [5, 4, 3, 2, 1]]
% 7 7 [[], [4], [5], [6], [6, 3], [6, 5], [6, 5, 4, 3, 2, 1]]
% 8 11 [[], [4], [5], [6], [6, 3], [6, 4, 2], [7], [7, 4], [7, 6], [7, 6, 5], [7, 6, 5, 4, 3, 2, 1]]
% 9 12 [[], [5], [6], [6, 3], [7], [7, 5], [8], [8, 4], [8, 5], [8, 7], [8, 7, 6], [8, 7, 6, 5, 4, 3, 2, 1]]
% 10 18 [[], [5], [6], [7], [8], [8, 4], [8, 5], [8, 6], [8, 6, 4, 2], [9], [9, 5], [9, 6], [9, 6, 3], [9, 8], [9, 8, 4], [9, 8, 7], [9, 8, 7, 6], [9, 8, 7, 6, 5, 4, 3, 2, 1]]
% 11 17 [[], [6], [7], [8], [8, 4], [9], [9, 6, 3], [9, 7], [10], [10, 5], [10, 6], [10, 7], [10, 9], [10, 9, 5], [10, 9, 8], [10, 9, 8, 7], [10, 9, 8, 7, 6, 5, 4, 3, 2, 1]]
% 12 25 [[], [6], [7], [8], [8, 4], [9], [9, 6, 3], [10], [10, 5], [10, 6], [10, 7], [10, 8], [10, 8, 6, 4, 2], [11], [11, 6], [11, 7], [11, 8], [11, 8, 4], [11, 10], [11, 10, 5], [11, 10, 6], [11, 10, 9], [11, 10, 9, 8], [11, 10, 9, 8, 7], [11, 10, 9, 8, 7, 6, 5, 4, 3, 2, 1]]
and the sequence for the number of Wilf-classes starts
\[
  1, 1, 2, 4, 4, 7, 7, 11, 12, 18, 17, 25, 27, 38, 38\quad(k = 1,2,\dots,15)
\]
For $k\geq 3$ these numbers coincide with those of sequence A304178 in
the OEIS, which leads us to the conjecture below. For a palindrome
$w=c_1\ldots c_n\in\{0,1\}^n$ let
$P(w) = \{ i\in [n]: \text{$c_1\ldots c_i$ is a palindrome}\}$ be the
set of palindrome prefix lengths of $w$; e.g.\
$P(0100010) = \{1,3,7\}$.

\begin{conjecture}
  Let $a_k=|\{\Omega(\sigma,\sigma): \sigma\in\Sym_k\}|$ be the number
  of Wilf-classes of Hertzsprung patterns of length $k$.  Let $b_k$ be
  the number of distinct sets $P(w)$ for palindromes
  $w\in\{0,1\}^k$. Then $a_k = b_{k+1}$ for $k\geq 3$.
\end{conjecture}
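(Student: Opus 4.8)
The plan is to recast both sides as families of \emph{period sets} and then match these families. On the permutation side, I would expand $\Omega(\sigma,\sigma)=\sum_{i\ge 1}\chi_i(\sigma,\sigma)\,x^{k-i}$ using the explicit rule for $\chi_i$: since a Hertzsprung occurrence is a genuine shift, gluing the length-$i$ suffix of $\sigma$ onto its length-$i$ prefix inside one permutation forces the common difference of the two blocks to be exactly $\pm(k-i)$. Hence, writing $p=k-i$, we get $\chi_i(\sigma,\sigma)=1$ if and only if either $\sigma(j+p)=\sigma(j)+p$ for all admissible $j$ (an \emph{up-period}) or $\sigma(j+p)=\sigma(j)-p$ for all admissible $j$ (a \emph{down-period}); equivalently, $p$ is a period of the word $\sigma-\id$ or of $\sigma+\id$. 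Thus $\Omega(\sigma,\sigma)=\sum_{p\in\Pi(\sigma)}x^{p}$, where $\Pi(\sigma)\subseteq\{1,\dots,k-1\}$ is the set of up- and down-periods, so $a_k=\bigl|\{\Pi(\sigma):\sigma\in\Sym_k\}\bigr|$. On the palindrome side, a prefix $c_1\cdots c_j$ of a palindrome $w$ of length $k+1$ is itself a palindrome exactly when it equals the length-$j$ suffix, i.e.\ when $w$ has a border of length $j$, i.e.\ when $w$ has period $(k+1)-j$; so $P(w)$ is the border set, equivalent data to the period set of $w$. Because $c_1=c_{k+1}$ gives period $k$ for \emph{every} palindrome, the only part of the period set that varies lies in $\{1,\dots,k-1\}$, whence $b_{k+1}=\bigl|\{\operatorname{Per}(w)\cap\{1,\dots,k-1\}:|w|=k+1\text{ palindromic}\}\bigr|$. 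Under the correspondence ``overlap size $i\leftrightarrow$ border length $i+1$'' a period $p$ of $\sigma$ matches a period $p$ of $w$, so the conjecture reduces to the identity of families of subsets of $\{1,\dots,k-1\}$
\[
  \{\Pi(\sigma):\sigma\in\Sym_k\}=\{\operatorname{Per}(w)\cap\{1,\dots,k-1\}:w\text{ palindrome},\,|w|=k+1\}.
\]

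Next I would decouple the up- and down-periods. The key lemma is that no $\sigma$ has both a nontrivial up-period $p$ and a nontrivial down-period $q$: up-periodicity gives $\sigma(1)=\sigma(1+p)-p\le k-p$ while down-periodicity gives $\sigma(1)=\sigma(1+q)+q\ge q+1$, so $p+q\le k-1$; on the other hand, if $m$ is the position of the value $k$ then $m\ge k-p+1$ (else $\sigma(m+p)=k+p>k$) and $m\le q$ (else $\sigma(m-q)=k+q>k$), forcing $p+q\ge k+1$ — a contradiction. Consequently each $\Pi(\sigma)$ is a \emph{pure} up-period set or a pure down-period set. The reversal $\sigma\mapsto(j\mapsto\sigma(k+1-j))$ is an involution of $\Sym_k$ that interchanges up- and down-periods of each length, so the family of down-period sets equals the family of up-period sets. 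Therefore $\{\Pi(\sigma)\}$ is exactly the family of period sets (inside $\{1,\dots,k-1\}$) of the words $\sigma-\id$, $\sigma\in\Sym_k$.

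The core of the argument is then to show that these permutation period sets coincide with the palindrome period sets, and I would analyse both through residues modulo a period $p$. For permutations, $\sigma-\id$ having period $p$ forces $\sigma$ to preserve the blocks $\{1,\dots,k-p\}$ and $\{k-p+1,\dots,k\}$ and to satisfy $\sigma(j+p)=\sigma(j)+p$; writing $k=ap+b$, this is equivalent to a size-preserving bijection of the residue classes modulo $p$ that sends each class to the minimal element of its image, and the full up-period set is the correlation of the resulting word. For palindromes, $w$ having period $p$ means $w$ is constant on residue classes modulo $p$, with the palindromic symmetry $w_i=w_{k+2-i}$ forcing the colour of class $r$ to equal that of class $(k+2-r)\bmod p$; the period set is the correlation of this reflection-invariant colouring. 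The hard part will be proving that these two realizability conditions cut out the same subsets, which I would attempt by a two-way construction — from a realizing permutation extract $p$ and build the period-$p$ palindrome whose reflection-invariant colouring mirrors the class bijection, and conversely turn a palindromic colouring into a block permutation — combined with the Fine–Wilf theorem to control which periods can coexist.

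The main obstacle is precisely this final matching: the permutation ``tiling'' constraint and the palindromic ``reflection'' constraint look quite different, and neither coincides with the full Guibas–Odlyzko family of string correlations. The case $k=5$ is the instructive warning: no length-$5$ permutation has up-period set $\{2,4\}$ (minimal up-period $2$ forces $\sigma=\id$), and correspondingly no length-$6$ palindrome can have period $2$ without period $1$ (since $w_1=w_6$ collapses $01$-periodicity), yet the unconstrained word $01010$ realises $\{2,4\}$; so one must show that the two rigidities, tied respectively to length $k$ and length $k+1$, remove exactly the same sets. I expect an induction on $k$ — peeling the outermost pair of letters of the palindrome and the extreme blocks of the permutation in parallel — to be the cleanest way to align the two characterizations and thereby establish the displayed identity, and hence $a_k=b_{k+1}$ for $k\ge 3$.
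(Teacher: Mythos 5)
The first thing to note is that the paper does \emph{not} prove this statement: it is posed as an open conjecture, supported only by computation (the listed autocorrelation sets for $k\le 7$, the Wilf-class counts for $k\le 15$, and the observed agreement with OEIS A304178). So there is no proof in the paper to measure yours against; a complete argument would be new mathematics. Your reductions, as far as they go, are correct and genuinely useful. The expansion $\Omega(\sigma,\sigma)=\sum_{p\in\Pi(\sigma)}x^{p}$, where $\Pi(\sigma)\subseteq\{1,\dots,k-1\}$ collects the up-periods ($\sigma(j+p)=\sigma(j)+p$) and down-periods ($\sigma(j+p)=\sigma(j)-p$), is a faithful unpacking of the paper's rule for $\chi_i$. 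Your exclusion lemma is right: the inequalities $q+1\le\sigma(1)\le k-p$ and $k-p+1\le\sigma^{-1}(k)\le q$ do contradict each other, so no permutation carries both kinds of nontrivial period, and reversal swaps the two kinds. The palindrome-side translation (palindromic prefix of length $j$ $\Leftrightarrow$ border of length $j$ $\Leftrightarrow$ period $k+1-j$) is also correct. One small repair: to identify $\{\Pi(\sigma):\sigma\in\Sym_k\}$ with the family of up-period sets of the words $\sigma-\id$ you additionally need $\emptyset$ to occur as some $\Pi(\sigma)$, i.e.\ a permutation with no periods at all; this holds for $k\ge 3$ (e.g.\ $\sigma=k\,1\,2\cdots(k-1)$) but fails for $k=2$, where both permutations have $\Pi=\{1\}$ while $21-\id$ has empty period set --- consistent with, and explaining, the restriction $k\ge 3$ in the statement.

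The genuine gap is that your argument stops exactly where the conjecture begins. After the reductions, the claim to be proved is the equality of two families of subsets of $\{1,\dots,k-1\}$: period sets of the difference words $\sigma-\id$, $\sigma\in\Sym_k$, versus period sets of binary palindromes of length $k+1$. That equality \emph{is} the conjecture, and your final two paragraphs offer only intentions (``I would attempt\dots'', ``I expect an induction\dots to be the cleanest way''), not an argument: no map is constructed in either direction, the residue-class descriptions of the two sides are never shown to generate the same correlations, and Fine--Wilf is invoked only as a hoped-for tool. Your own $k=5$ warning shows why this step cannot be waved through: $\{2,4\}$ is the period set of the word $01010$, yet is realized by no permutation of length $5$ and by no palindrome of length $6$; the two realizability constraints are distinct rigidities tied to different lengths, and proving that they excise exactly the same sets is precisely the open problem. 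As it stands, you have produced a correct and clarifying reformulation of the conjecture --- arguably a good first section of a future proof --- but not a proof of it.
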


Recall that Bóna~\cite{Bona2008} showed that
$|\Sym_n(\tau)|\leq |\Sym_n(\id_k)|$ for each non-self-overlapping
pattern $\tau\in\Sym_k$. Data collected using
Corollary~\ref{single-pattern} suggest that the inequality holds
regardless of whether $\tau$ is self-overlapping or not.

\begin{conjecture}
  We have $|\Sym_n(\tau)|\leq |\Sym_n(\id_k)|$ for
  all Hertzsprung patterns $\tau\in\Sym_k$ and all natural numbers $n$.
\end{conjecture}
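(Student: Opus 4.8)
The plan is to reduce the inequality to a monotonicity property of the single–pattern generating function furnished by Corollary~\ref{single-pattern}, and then to prove that monotonicity. Setting $u=0$ in Corollary~\ref{single-pattern} gives, for every $\tau\in\Sym_k$,
\[
  \sum_{n\ge 0}|\Sym_n(\tau)|\,x^n=\fsum{\,x-\frac{x^{k}}{1+\Omega(\tau,\tau)}\,},
\]
so that $|\Sym_n(\tau)|$ depends on $\tau$ only through the autocorrelation polynomial $\omega_\tau:=\Omega(\tau,\tau)$. First I would record that $\id_k$ maximizes this polynomial coefficientwise: the last $i$ and first $i$ letters of $\id_k$ differ by the constant $k-i$, so $\chi_i(\id_k,\id_k)=1$ for every proper overlap and $\Omega(\id_k,\id_k)=x+x^{2}+\dots+x^{k-1}$. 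Since each $\chi_i(\tau,\tau)\in\{0,1\}$ contributes to the same monomial $x^{k-i}$, we obtain $\omega_\tau\preceq\omega_{\id_k}$ coefficientwise for every $\tau\in\Sym_k$. The theorem then follows from the monotonicity claim that I take to be the crux: if $\omega\preceq\omega'$ coefficientwise, with $\omega,\omega'$ having nonnegative coefficients and zero constant term, then $[x^{n}]\fsum{\,x-x^{k}/(1+\omega)\,}\le[x^{n}]\fsum{\,x-x^{k}/(1+\omega')\,}$ for all $n$.

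By transitivity it suffices to treat a single increment $\omega'=\omega+x^{j}$, which I would handle by a continuous deformation. Writing $G_\omega=x-x^{k}/(1+\omega)$ and regarding the coefficient $c_j$ of $x^{j}$ in $\omega$ as a nonnegative real parameter, the chain rule gives
\[
  \frac{\partial}{\partial c_j}\fsum{G_\omega}
  =\Bigl(\sum_{m\ge 1}m\,m!\,G_\omega^{\,m-1}\Bigr)\cdot\frac{x^{k+j}}{(1+\omega)^{2}},
\]
so the lemma reduces to showing that this product has nonnegative coefficients for every $n$, uniformly in $c_j\ge 0$; integrating in $c_j$ from $0$ to $1$ then yields the single increment.

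The hard part will be exactly this positivity, and it is where I expect the real work to lie. Neither factor is coefficientwise nonnegative on its own: $G_\omega$ has negative coefficients from the $-x^{k}/(1+\omega)$ term, while $1/(1+\omega)^{2}=\sum_{p\ge 0}(p+1)(-\omega)^{p}$ has coefficients of mixed sign. Equivalently, the finite difference has the closed form $G_{\omega'}-G_\omega=x^{k+j}/\bigl((1+\omega)(1+\omega')\bigr)$, whose sign–indefinite factor $1/\bigl((1+\omega)(1+\omega')\bigr)$ again spoils any naive coefficientwise comparison, so the required positivity can emerge only after the cancellations produced by applying $\fsum{\cdot}$. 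I would attack this in one of two ways. The first is combinatorial: expand $|\Sym_n(\tau)|=\sum_{(\pi,M)}(-1)^{|M|}$ over marked permutations as in the cluster method, and construct a sign–reversing involution that pairs the extra signed terms created by the overlap $x^{j}$ against one another, leaving a nonnegative residue; the self–overlap structure encoded by $\Omega$ is precisely what such an involution must exploit. The second is probabilistic: read $|\Sym_n(\tau)|/n!$ as the probability that a uniform permutation has no $\tau$–occurrence and argue, via a correlation inequality of FKG– or Janson–type, that increasing the self–overlap increases the positive dependence among the overlapping occurrence events and hence the probability that none of them occurs, with $\id_k$ the case of maximal dependence. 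In either approach the difficulty is the same—converting the sign–indefinite factor $1/\bigl((1+\omega)(1+\omega')\bigr)$ into a genuinely positive quantity—and a successful argument must in particular recover the special case $\omega=0$, which is Bóna's theorem~\cite{Bona2008} for non–self–overlapping patterns, and then interpolate across the intermediate self–overlapping cases that constitute the new content of the conjecture.
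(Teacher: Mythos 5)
You should first know that the paper offers no proof of this statement: it is posed as an open conjecture, supported only by data computed via Corollary~\ref{single-pattern}, so the only question is whether your argument closes it. The preliminary reductions you make are correct: setting $u=0$ in Corollary~\ref{single-pattern} shows that $|\Sym_n(\tau)|$ depends on $\tau$ only through the autocorrelation polynomial $\omega_\tau=\Omega(\tau,\tau)$; your claim that $\omega_\tau\preceq\Omega(\id_k,\id_k)=x+x^2+\cdots+x^{k-1}$ coefficientwise for every $\tau\in\Sym_k$ is right, since each $\chi_i(\tau,\tau)\in\{0,1\}$ contributes to the monomial $x^{k-i}$; and your derivative and finite-difference computations check out.

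The genuine gap is the monotonicity lemma itself, which you explicitly leave unproved: you name two possible attacks (a sign-reversing involution on marked permutations, and an FKG/Janson-type correlation inequality) but carry out neither, and that lemma is precisely the open content of the conjecture --- everything before it is bookkeeping. Two further cautions about the route you chose. First, your lemma, quantified over arbitrary polynomials $\omega$ with nonnegative real coefficients, is strictly stronger than the conjecture: for $\omega$ that is not the autocorrelation polynomial of an actual pattern (in particular for $c_j\in(0,1)$ in your continuous deformation), the series $\fsum{x - x^{k}/(1+\omega)}$ has no interpretation as counting avoiders, so the cluster-method and correlation-inequality tools you propose are unavailable exactly where you need them, and it is conceivable that the stronger statement fails while the conjecture holds. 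Second, recovering Bóna's theorem only handles the comparison $\omega=0$ versus $\omega_{\id_k}$; his argument does not proceed by incrementing one coefficient at a time, so there is no evidence that the individual single-increment steps in your transitivity chain are true. As it stands, the proposal is a plausible and correctly executed reformulation of the conjecture, not a proof of it.
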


In view of Theorem~\ref{cluster-thm} it is natural to ask if there are
nontrivial mesh-patterns $p$ other than the Hertzsprung patterns for
which there is a rational function $R(x)$ such that
$\sum_{n\geq 0}|\Sym(p)|x^n = \sum_{m\geq 0}m!R(x)^{m}$?  It appears
that the answer is yes.
\begin{conjecture}
  We have
  \[\sum_{n\geq 0}|\Sym_n(p)|\mskip1mu x^n
    = \fsum{\frac{x}{1+x^2}},
    \;\text{ where }\,
    p = \pattern{}{3}{1/1,2/3/,3/2}{0/2,0/3,1/2,1/3,2/0,2/1,2/2,2/3,3/1,3/2}
  \]
\end{conjecture}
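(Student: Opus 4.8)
The plan is to carry the cluster machinery of Section~\ref{enumeration} over to the single mesh pattern $p$, the only real obstruction being that $p$ is not a Hertzsprung pattern. The first step is to unwind the shading. Writing an occurrence as three points $(i_1,w_1)$, $(i_2,w_3)$, $(i_3,w_2)$ with $w_1<w_2<w_3$, the fully shaded middle column and middle row force $i_3=i_2+1$ and $w_3=w_2+1$, so the ``$32$'' of $p$ is literally a Hertzsprung occurrence of $21$. Feeding the remaining shaded cells into this, and writing $b=w_2$ and $j=i_2$, one finds that an occurrence with descent $(b+1,b)$ at positions $(j,j+1)$ exists precisely when $\max_{i<j}\pi(i)=b-1$. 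Hence
\[
 \pi\text{ contains }p \iff \exists\,j\ge 2:\ \pi(j)=\pi(j+1)+1\ \text{ and }\ \max_{i<j}\pi(i)=\pi(j+1)-1 .
\]
(For instance $1243$ and $2143$ contain $p$ with witness $j=3$, while $1432$ avoids it.) I would record this criterion first, as it is what makes the pattern tractable.

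Next I would match the conjectured answer to the shape of Theorem~\ref{cluster-thm}. Since $x/(1+x^2)=x+(-x^3)/(1+x^2)$, the conjecture is equivalent to the statement that the signed cluster generating function of $p$ is $C(\mathbf{-1};x)=-x^3/(1+x^2)$, i.e.\ that $C(u;x)=ux^3/(1-ux^2)$. The denominator $1-ux^2$ predicts clusters of every odd length $2r+3$, obtained by chaining occurrences that overlap in a single letter, each link contributing $x^2$. This is exactly what the criterion yields: $13254$ carries occurrences at $\{1,2,3\}$ and $\{3,4,5\}$ sharing the position of the value $2$, and iterating gives the clusters $1\,3\,2\,5\,4\,7\,6\cdots$. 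I would make this overlap structure precise and then compute $C(u;x)$ by the transfer-matrix argument of Theorem~\ref{transfer-thm}, which here should produce a single self-loop of weight $ux^2$ and hence the geometric series above.

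The hard part---and the reason this remains a conjecture---is that occurrences of $p$ are \emph{not} consecutive factors: the ``$1$'' may be separated from the ``$32$'' by a gap, as in $1243$, whose occurrence occupies the non-interval $\{1,3,4\}$. Consequently the inflation/cluster decomposition behind $F(\uu;x)=\fsum{x+C(\uu;x)}$ fails verbatim, because a marked permutation such as $\bigl(1243,\{(1,4,3)\}\bigr)$ is indecomposable yet its single mark does not cover it, so it is neither a singleton nor a cluster. The plan to repair this is to work at the avoidance specialisation $\uu=\mathbf{-1}$ and exhibit a sign-reversing involution on marked permutations whose fixed points are exactly the configurations $(\pi,M)$ in which $M$ is a disjoint union of interval clusters together with uncovered singletons, and which pairs off---with opposite sign $(-1)^{|M|}$---every configuration containing a gapped or otherwise non-interval marked component. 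Granting such an involution, $F(\mathbf{-1};x)$ collapses to the sum over the decomposable configurations, which factors as $\fsum{x+C(\mathbf{-1};x)}=\fsum{x/(1+x^2)}$ by the usual skeleton argument.

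I expect the construction of this involution to be the main obstacle: one must control exactly how the long-range condition $\max_{i<j}\pi(i)=\pi(j+1)-1$ interacts with overlapping occurrences, a phenomenon with no analogue in the factor-based setting of Section~\ref{enumeration}. As a fallback that avoids the bookkeeping, I would turn the criterion into enumeration directly: the series $g(y)=\sum_{m\ge0}m!\,y^m$ satisfies $y^2g'(y)=(1-y)g(y)-1$, so substituting $y=x/(1+x^2)$ yields a first-order linear ODE with polynomial coefficients for $\sum_n|\Sym_n(p)|x^n$, equivalently a P-recurrence for the numbers $a_n=|\Sym_n(p)|$. I would then prove the same recurrence combinatorially by decomposing a $p$-avoider according to its left-to-right maxima and the places where the running maximum increases by more than one, since by the criterion these are precisely the positions at which a forbidden descent could be completed. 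Either route reduces the conjecture to a finite base check together with one inductive step.
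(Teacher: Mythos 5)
This statement is not proved in the paper at all: it is stated as an open conjecture, supported only by computation of $|\Sym_n(p)|$ for $n\leq 14$ and by the observed equivalence (via Brualdi and Deutsch's result on A177249) with $|\Sym_n(p)|=a_n+a_{n-2}$. So there is no paper proof to compare against, and your proposal has to stand on its own. Its solid contribution is the unwinding of the mesh: your criterion that $\pi$ contains $p$ if and only if there is $j\geq 2$ with $\pi(j)=\pi(j+1)+1$ and $\max_{i<j}\pi(i)=\pi(j+1)-1$ is correct; it reproduces the paper's counts ($132$ is the unique container in $\Sym_3$, and $1324$, $1243$, $2143$, $2431$ are the containers in $\Sym_4$, giving $5$ and $20$ avoiders), and the algebraic reformulation $x/(1+x^2)=x+(-x^3)/(1+x^2)$ is also fine.

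But your proposal is a research plan, not a proof, and the steps you leave open are exactly the substance of the conjecture. Route (a), the sign-reversing involution, is postulated rather than constructed, and the cluster structure it is supposed to certify is demonstrably too naive. Occurrences of $p$ overlap in more ways than the single-letter chaining you describe: $1243$ has \emph{two} occurrences, at positions $\{1,3,4\}$ and $\{2,3,4\}$ (for the latter take the ``$1$'' of the pattern to be the value $2$), which overlap and jointly cover all four positions; the same happens in $2143$. So, with clusters defined from the full occurrence set, there exist connected covering configurations of even length $4$, and the signed cluster series cannot be $ux^3/(1-ux^2)$; in particular the transfer-matrix picture with one self-loop of weight $ux^2$ is wrong. (Your own example is also off: $13254$ has four occurrences, at $\{1,2,3\}$, $\{2,4,5\}$, $\{3,4,5\}$ and $\{1,4,5\}$, not two.) Any involution must cancel all of these extra configurations, and nothing in your sketch indicates how; that cancellation is the missing idea, not a detail.

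Route (b) fares no better: the identity $y^2g'(y)=(1-y)g(y)-1$ for $g(y)=\sum_{m\geq 0}m!\,y^m$ is correct, but substituting $y=x/(1+x^2)$ only \emph{translates} the conjecture into a P-recurrence for $a_n=|\Sym_n(p)|$; the promised combinatorial proof of that recurrence by decomposing avoiders along left-to-right maxima is never carried out, and proving it is again precisely the open problem. In short, you correctly diagnose why the factor-based cluster method of Section~\ref{enumeration} does not apply to this mesh pattern, but neither of your two repair strategies is executed, so the statement remains unproved --- consistent with its status in the paper as a conjecture.
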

This conjecture is based on computing the numbers $|\Sym_n(p)|$ for
$n\leq 14$. They are $1$, $1$, $2$, $5$, $20$, $103$, $630$, $4475$,
$36232$, $329341$, $3320890$, $36787889$, $444125628$, $5803850515$,
and $81625106990$. At the time of writing this sequence is not in the
OEIS~\cite{OEIS}. If the conjecture is true then there is, however, a
close connection with a sequence in the OEIS, namely A177249. It is
defined by letting $a_n$ be number of permutations of $[n]$ whose
disjoint cycle decompositions have no adjacent transpositions, that
is, no cycles of the form $(i,i+1)$. Let
$F(x)=\sum_{n\geq 0}|\Sym_n(p)|x^n$ be the sought series and let
$A(x)=\sum_{n\geq 0}a_nx^n$.  Brualdi and Deutsch~\cite{Brualdi2012}
have shown that $(1+x^2)A(x)=F(x)$.  Our conjecture is thus equivalent
to $|\Sym_n(p)|=a_n+a_{n-2}$ for $n\geq 2$. Alternatively, one may
note that the compositional inverse of $x/(1+x^2)$ is $xC(x^2)$, where
$C(x)$ is the generating function for the Catalan numbers. Thus, our
conjecture is also equivalent to $[x^n]F(xC(x^2))=n!$, which could
lead to a novel decomposition of permutations.

As defined, pattern-rewriting systems are limited to transforming
Hertzsprung patterns. This is an artificial limitation though.  Let
$\Mesh_k = \Pow\bigl([0,k]\times [0,k]\bigr)$ be the set of
$(k+1)\times(k+1)$ meshes. Define a \emph{pattern-rewriting system} as a
set of rules
$R \subseteq \bigcup_{k\geq 0}(\Sym_k\times\Mesh_k\times\Sym_k)$.  Each
rule $(\alpha, M, \beta)$ is extended to $\Sym = \cup_n \Sym_n$ by
allowing that any occurrence of the mesh pattern $(\alpha, M)$ is
rewritten as an occurrence of $\beta$. In particular, this encompasses
the three types of equivalences studied by Linton~\etal, $M=\emptyset$,
$M = [1,k-1]\times [0,k]$ and $M=H_k$.

% Could this shed light on Sergey's 123 boxed pattern question?

Using Lemma~\ref{prs-confluence} we are able to automate the proof of
confluence for any finite and terminating pattern-rewriting system. Our
current strategy to prove that a pattern-rewriting system terminates
requires us to manually come up with an increasing statistic. This was
easy for the small systems that we have considered but may not be easy
in general. Can there be a mechanical test? Or is this property
undecidable? It is known that termination of string-rewriting systems is
undecidable, even assuming that the system is
length-preserving~\cite{Caron1991}.

The starting point of this article was Hertzsprung's problem and we have
implicitly considered it the simplest nontrivial instance of a larger
class of problems. There is arguably a simpler, yet nontrivial, instance
though: permutations avoiding the single Hertzsprung pattern 12. The
study of the corresponding counting sequence appears to have started
with Euler. We have
\[
  \sum_{\pi\in\Sym}u^{12(\pi)}x^{|\pi|} = \fsum{\frac{x}{1-(u-1)x}}.
\]
One may also note that any permutation $\pi$ can be uniquely written as
an inflation
\[
  \pi=\sigma[\id_{k_1},\id_{k_2},\dots,\id_{k_m}],
\]
where $\sigma$ avoids the Hertzsprung pattern 12. Allowing ourselves to
use the terminology of $L$-species~\cite{species-book} this amounts to
the isomorphism $S'=F'\cdot E$, where $E$, $S$ and $F$ are the
$L$-species of sets, permutations and 12-avoiding permutations,
respectively. Indeed, % Recall that $G'[\ell] = F[1\oplus \ell]$
suppose $n\geq 1$ and write $\pi\in S_n$ as
$\pi=\sigma[\id_{k_1},\id_{k_2},\dots,\id_{k_m}]=\pi_1\pi_2\cdots\pi_m$. Let
$\hat\sigma$ be the permutation obtained by taking the first element of
each $\pi_i$ and let the set $A$ consist of all the remaining
elements. Then $\pi\mapsto (\hat\sigma, A)$ is the sought
isomorphism. Clearly $\hat\sigma\simeq\sigma$ and $12(\pi)=|A|$. For
instance,
\[
  3142[12,1,1234,12] = 451678923
  \;\mapsto\; \bigl(\mskip1mu 4162,\, \{3,5,7,8,9\} \mskip1mu\bigr).
\]
% For $n=3$ we have
% $123 \mapsto (1,\{2,3\})$,
% $132 \mapsto (132,\emptyset)$,
% $213 \mapsto (213,\emptyset)$,
% $231 \mapsto (21,\{3\})$,
% $312 \mapsto (31,\{2\})$ and
% $321 \mapsto (321,\emptyset)$.
Since $S(x)=(1-x)^{-1}$ and $E(x)=e^x$ it follows from $S'(x)=F'(x)E(x)$
that $F'(x) = e^{-x}/(1-x)^2$.  Replacing the species $E$ with the
$\ZZ[u]$-weighted species $E_u$ in which each set $A$ has weight $u^{|A|}$ and,
similarly, replacing the species $S$ with $S_w$ where the weight
function is $w(\pi)=u^{12(\pi)}$, we have $S_w'=F'\cdot E_u$ and hence
\[
  \sum_{n\geq 0}\bigg(\sum_{\pi\in\Sym_n}u^{12(\pi)}\bigg)\frac{x^n}{n!}
  \;=\; 1 + \int_0^x \frac{e^{(u-1)t}}{(1-t)^2}\, dt.
\]
Even simple Hertzsprung patterns have interesting properties!
% Note the similarity between $S'=F'\cdot E$ and classic isomorphism
% $S = \Der\cdot E$, where $\Der$ is the species of derangements. On
% differentiating $S = \Der\cdot E$ we find that $F' = \Der + \Der'$.

\section*{Appendix}

Table~\ref{SEQ-Table} gives the number of equivalence classes modulo
EQ2--EQ7 for $1\leq n \leq 20$.

\begin{table}
\[
\hspace{-0.5em}
\begin{array}{r||l|l|l}
% \hline\\[-2.2ex]
& \text{EQ2}\;\&\;\text{EQ3}\;(\text{A212580}) &
\text{EQ4}\;(\text{A212581}) &
\text{EQ5}\;(\text{A212432}) \\[0.1ex]\hline &&\\[-2ex]
\begin{array}{rl}
  1  \\
  2  \\
  3  \\
  4  \\
  5  \\
  6  \\
  7  \\
  8  \\
  9  \\
  10 \\
  11 \\
  12 \\
  13 \\
  14 \\
  15 \\
  16 \\
  17 \\
  18 \\
  19 \\
  20 \\[1ex]
\end{array}&
% EQ2--EQ3
\begin{array}{l}
  1 \\
  2 \\
  5 \\
  20 \\
  102 \\
  626 \\
  4458 \\
  36144 \\
  328794 \\
  3316944 \\
  36755520 \\
  443828184 \\
  5800823880 \\
  81591320880 \\
  1228888215960 \\
  19733475278880 \\
  336551479543440 \\
  6075437671458000 \\
  115733952138747600 \\
  2320138519554562560 \\
\end{array}&
% EQ4
\begin{array}{l}
  1 \\
  2 \\
  4 \\
  17 \\
  89 \\
  556 \\
  4011 \\
  32843 \\
  301210 \\
  3059625 \\
  34104275 \\
  413919214 \\
  5434093341 \\
  76734218273 \\
  1159776006262 \\
  18681894258591 \\
  319512224705645 \\
  5782488507020050 \\
  110407313135273127 \\
  2218005876646727423 \\
\end{array}&
% EQ5
\begin{array}{l}
  1 \\
  2 \\
  4 \\
  16 \\
  84 \\
  536 \\
  3912 \\
  32256 \\
  297072 \\
  3026112 \\
  33798720 \\
  410826624 \\
  5399704320 \\
  76317546240 \\
  1154312486400 \\
  18604815528960 \\
  318348065548800 \\
  5763746405053440 \\
  110086912964367360 \\
  2212209395234979840 \\
\end{array}\\
\hline
\hline &&\\[-2.2ex]
&\text{EQ6}\;(\text{A212433}) &
\text{EQ7} & \\[0.1ex]\hline &&\\[-2ex]
\begin{array}{rl}
  1  \\
  2  \\
  3  \\
  4  \\
  5  \\
  6  \\
  7  \\
  8  \\
  9  \\
  10 \\
  11 \\
  12 \\
  13 \\
  14 \\
  15 \\
  16 \\
  17 \\
  18 \\
  19 \\
  20 \\[1ex]
\end{array}&
% EQ6
\begin{array}{l}
  1 \\
  2 \\
  3 \\
  13 \\
  71 \\
  470 \\
  3497 \\
  29203 \\
  271500 \\
  2786711 \\
  31322803 \\
  382794114 \\
  5054810585 \\
  71735226535 \\
  1088920362030 \\
  17607174571553 \\
  302143065676513 \\
  5484510055766118 \\
  104999034898520903 \\
  2114467256458136473 \\
\end{array}&
% EQ7
\begin{array}{l}
  1 \\
  2 \\
  1 \\
  6 \\
  40 \\
  330 \\
  2664 \\
  23258 \\
  222154 \\
  2326410 \\
  26568950 \\
  328995136 \\
  4392819522 \\
  62935547966 \\
  963253101304 \\
  15688298164890 \\
  270944692450742 \\
  4946387077324072 \\
  95184319122508074 \\
  1925732716758497918 \\
\end{array}
&
\begin{array}{ll}
\end{array}
\end{array}
\]
\caption{Enumeration of equivalence classes EQ2--EQ7}\label{SEQ-Table}
\end{table}

\bibliographystyle{plain}
\bibliography{references}

\end{document}